\title{Slicing a 2-sphere}
\author{Yevgeny Liokumovich}
\date{}                                           
\begin{document}
\maketitle

\theoremstyle{plain}
\newtheorem{theorem}{Theorem}
\newtheorem{definition}[theorem]{Definition}
\newtheorem{lemma}[theorem]{Lemma}
\newtheorem{proposition}[theorem]{Proposition}
\newtheorem{corollary}[theorem]{Corollary}

\begin{abstract}
We show that for every complete Riemannian surface $M$
diffeomorphic to a sphere with $k \geq 0$ holes
there exists a Morse function $f:M \rightarrow \mathbb{R}$,
which is constant on each connected component of the boundary of $M$
and has fibers of length no more than $52 \sqrt{Area(M)}+length(\partial M)$.
We also show that on every 2-sphere there exists a simple closed curve
of length $\leq 26 \sqrt{Area(S^2)}$ subdividing the sphere into 
two discs of area $\geq \frac{1}{3}Area(S^2)$.
\end{abstract}

\section{Introduction}

Let $M$ be a Reiamannian 2-sphere. Denote the area of $M$ by $|M|$.
In this paper we consider the problem of slicing $M$ by short curves.

We start with the following isoperimetric problem:
when is it possible to subdivide $M$ into two regions of relatively large area
by a short simple closed curve?

Papasoglu \cite{P} used Besicovitch inequality to show that there exists
a simple closed curve of length $\leq 2 \sqrt{3} |M| + \epsilon$
subdividing $M$ into two regions of area $\geq \frac{1}{4} |M|$.
A similar result was independently proved by Balacheff and Sabourau
\cite{BS} using a variation of Gromov's filling argument. 

On the other hand, consider the 3-legged starfish example on Figure \ref{starfish}.

\begin{figure} 
   \centering	
	\includegraphics[scale=0.3]{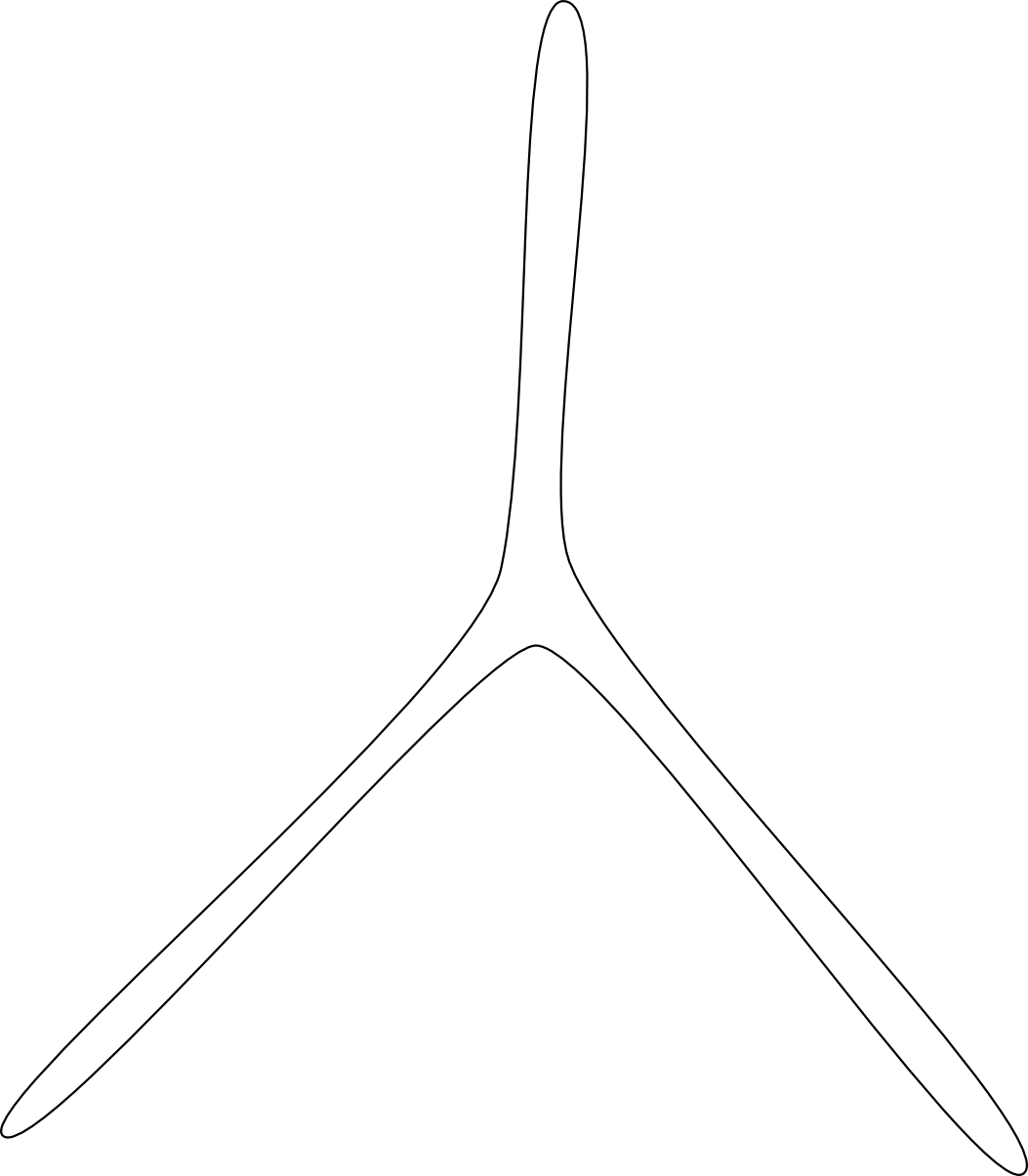}
	\caption{Example of a sphere that can not be subdivided into discs of approximately
	 equal area by a short curve} \label{starfish}
\end{figure}

As it has been observed in \cite{S}, for any $r> \frac{1}{3}$, 
if the tentacles are sufficiently thin and long, the length of
the shortest simple closed curve subdividing $M$ into two regions of area $\geq r|M|$
can be arbitrarily large. A. Nabutovsky asked (\cite{N}) the following question: what is the maximal 
value of $r \in [\frac{1}{4},\frac{1}{3}]$ such that for some
$c(r)$ each Riemannian 2-sphere of area $1$ can be subdivided into
two discs of area $\geq r$ by a simple closed curve of length $\leq c(r)$?

Our first result provides an answer for this question.

\begin{theorem} \label{1/3 area}
There exists a simple closed curve $\gamma$
of length $\leq 26 \sqrt{|M|}$ subdividing $M$ into two
subdiscs of area $\geq \frac{1}{3}|M|$.
\end{theorem}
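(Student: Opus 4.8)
The plan is to reduce, by removing a tiny disc, to the problem of cutting a disc of area almost $|M|$ by a short arc into pieces of prescribed areas, and then to feed in a disc-slicing estimate.

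\emph{Reduction to a disc.} Fix $p\in M$ and let $A$ be a closed metric disc about $p$ of tiny area $\delta$; for generic small radius its boundary is a simple closed curve of length $<\eta$, with $\eta=\eta(\delta)\to0$ as $\delta\to0$. Then $B:=\overline{M\setminus A}$ is a Riemannian disc with $|B|=|M|-\delta$ and $\mathrm{length}(\partial B)=\mathrm{length}(\partial A)<\eta$. Suppose we can find in $B$ a simple arc $\alpha$ with endpoints on $\partial B$, cutting $B$ into two subdiscs $B_1,B_2$ — with $B_1$ the one incident to a sub-arc $\sigma_1\subset\partial B$ — such that $|B_1|=\tfrac13|M|-\delta$ and $\mathrm{length}(\alpha)\le c\,\sqrt{|B|}+\mathrm{length}(\partial B)$ for an absolute constant $c<26$. (We will only really need $B_1$ to realize \emph{some} area in $[\tfrac13|M|-\delta,\tfrac23|M|-\delta]$.)

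\emph{Reconstructing the curve.} Glue $A$ to $B_1$ along $\sigma_1$, an arc in the boundary of each of the two discs; the result is a disc $\Omega:=A\cup B_1$ of area $\tfrac13|M|$ with boundary $\sigma_2\cup\alpha$, where $\sigma_2=\overline{\partial B\setminus\sigma_1}$. Since $M$ is a $2$-sphere and $\Omega$ a disc, $M\setminus\Omega=B_2$ is also a disc, of area $\tfrac23|M|$, bounded by the same curve. Hence $\gamma:=\sigma_2\cup\alpha$, smoothed at its two corners (which costs nothing), is a simple closed curve dividing $M$ into two subdiscs of areas $\tfrac13|M|$ and $\tfrac23|M|$, both $\ge\tfrac13|M|$, and $\mathrm{length}(\gamma)\le\mathrm{length}(\sigma_2)+\mathrm{length}(\alpha)\le c\sqrt{|B|}+2\,\mathrm{length}(\partial B)<c\sqrt{|M|}+2\eta$, which is $\le 26\sqrt{|M|}$ once $\delta$ (hence $\eta$) is small enough, using $c<26$. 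One could equally run the same reconstruction after replacing the tiny disc $A$ by the disc cut off by Papasoglu's curve \cite{P}, trading a weaker requirement on $c$ for a genuinely short $\partial A$; the tiny-disc version is cleanest.

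\emph{The main obstacle.} Everything hinges on the disc-slicing estimate used above: every Riemannian disc $D$ admits a sweepout by arcs with endpoints on $\partial D$, with the area on one side ranging continuously over $[0,|D|]$, all of length at most $c\sqrt{|D|}+\mathrm{length}(\partial D)$ with $c<26$ absolute — so that the area-balancing arc in the sweepout is automatically short. Proving this is the real work. Naive recursive bisection (cut $D$ roughly in half by a short arc and recurse) does produce a sweepout, but concatenating the cuts into a linear sweep forces several cuts to sit in the moving boundary at once and loses a $\log(|D|/\epsilon)$ factor; one must instead argue via a thick–thin / Besicovitch–coarea decomposition that keeps the ``free'' part of the moving boundary bounded by a single $\sqrt{|D|}$ (this is the $k=1$ case of the Morse-function statement in the abstract, but with the constant sharpened to beat $26$). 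Alternative routes — running a refined Besicovitch-inequality argument à la Papasoglu \cite{P} directly on $M$, or taking a short $1$-cycle enclosing area $\tfrac13|M|$ from a min–max sweepout and merging its components into a single circle by short arcs — all face the same crux: extracting a \emph{single} short simple closed curve with controlled enclosed area, and pushing the constant down to $26$.
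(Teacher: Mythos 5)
There is a genuine gap, and you flag it yourself: everything is deferred to the ``disc-slicing estimate'' of your last paragraph, which is never proved and which is in fact the entire content of the theorem. The gluing/reduction step (cap off a tiny disc, cut the complementary disc by an arc, reassemble) is correct but essentially trivial; it converts an arc-cut of a disc into a closed-curve-cut of the sphere at the cost of $2\,\mathrm{length}(\partial B)$. The problem is the lemma you feed into it. In its strong form --- a sweepout of a Riemannian disc $D$ by single arcs of length $\le c\sqrt{|D|}+\mathrm{length}(\partial D)$ whose enclosed area ranges over all of $[0,|D|]$ --- the statement is false: take the three-legged starfish of Figure \ref{starfish} with a tiny disc removed from its center. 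An arc realizing area exactly $\tfrac12|D|$ would have to enclose one tentacle plus half of another, and (by the same Besicovitch-type obstruction that makes $c(r)=\infty$ for $r>\tfrac13$) such an arc must run the length of a tentacle, so its length is unbounded in terms of $\sqrt{|D|}$. The weakened form you parenthetically retreat to --- some arc realizing an area in $[\tfrac13|M|-\delta,\tfrac23|M|-\delta]$ --- is, after your own reduction, equivalent to the theorem being proved, so the argument is circular.

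The paper takes a different and genuinely necessary route around exactly this obstruction. It first builds a $T$-map $f:M\to T$ into a trivalent tree with all fibers of length $\le 26\sqrt{|M|}$ (Theorem \ref{tree}, proved via Theorem \ref{tree1}: recursive Papasoglu subdivision, with Proposition \ref{subdivision} and Lemma \ref{2-gon} supplying the assembly of the cuts into a single foliation). The fibers over edge points are simple closed curves, but the enclosed area is \emph{not} a continuous sweep: it jumps at the trivalent vertices, whose fibers are $\theta$-graphs. The final step is a discrete argument at those vertices: for each $\theta$-fiber one gets three closed curves $\gamma^k_{ij}$; assuming all of them cut off less than $\tfrac13|M|$ on the smaller side, one chooses the $\gamma^k_{ij}$ maximizing that smaller area and follows the edge of $T$ pointing into the larger disc, where some circle fiber must achieve the $\tfrac13$--$\tfrac23$ split on pain of contradicting maximality. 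This vertex argument is precisely the substitute for the continuous arc-sweepout your proposal assumes; without it (or the $T$-map it rests on), your proof does not close.
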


To prove Theorem \ref{1/3 area} we obtain the following result
of independent interest.

\begin{theorem} \label{tree}
There exists a map $f$ from $M$ into a trivalent tree $T$,
such that fibers of $f$ have length $\leq 26 \sqrt{|M|}$ and controlled 
topology: preimage of every interior point is a simple closed curve,
preimage of every terminal vertex is a point and preimage of every vertex of
degree $3$ is homeomorphic to the greek letter $\theta$.
\end{theorem}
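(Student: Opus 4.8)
The plan is to construct $T$ and $f$ by a recursive procedure that alternates between \emph{sweeping out} pieces of $M$ by short curves and, whenever a sweepout would be forced to use a long curve, \emph{cutting} the piece along a short arc and continuing on the two halves; the tree $T$ records the combinatorics of this process.

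First I would reduce to discs: removing a tiny open disc from $M$ leaves a Riemannian disc $D$ whose boundary $c$ has arbitrarily small length, and it is enough to produce a map $f_D\colon D\to T_D$ onto a finite trivalent tree with $f_D^{-1}(\mathrm{root})=c$, fibers of length $\le 26\sqrt{|M|}$, and the stated fiber topology, after which one adjoins a terminal edge for the cap and cones it in (treating the old root as an interior point of the lengthened edge). The recursion is organized around \emph{active discs}: embedded discs $\Omega\subseteq D$ whose boundary $\gamma=\partial\Omega$ has length at most (say) $13\sqrt{|M|}$ and has already been installed in $T_D$ as a fiber over a vertex. To process such an $\Omega$ I would sweep it inward from $\gamma$ using a Morse smoothing $h$ of $d(\cdot,\gamma)$; its Reeb graph is a finite tree rooted at the vertex of $\gamma$, and by the local normal forms of Morse critical points its terminal fibers are points, its interior fibers are circles, and its degree-$3$ fibers are $\theta$-graphs — so the only thing that can go wrong is that some circle fiber is too long. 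While the fibers stay short I graft the sweep onto $T_D$; the first time a circle fiber $\beta$ reaches length $13\sqrt{|M|}$ I stop and turn to cutting the not-yet-processed subdisc $\Omega'$ bounded by $\beta$.

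The cutting step is the quantitative core. The key observation is that, the frontier having been pushed up to length $\approx\sqrt{|M|}$ while only $|M|$ of area is available, the ``bad'' region beyond $\beta$ is severely constrained: by the coarea inequality, any part of $\Omega'$ foliated by fibers of length $>13\sqrt{|M|}$ to depth $s$ has area $>13s\sqrt{|M|}$, so it sits inside a collar of $\beta$ of depth $<\sqrt{|M|}/13$. Such a \emph{shallow} collar — whose longitudinal slices are long but whose transverse extent is tiny — can be cut by finitely many short transverse arcs (of length comparable to the depth) into pieces admitting genuinely short sweepouts, at the cost of introducing finitely many degree-$3$ vertices whose $\theta$-fibers have the form $\beta\cup\beta'$ for a cutting arc $\beta'$, hence length $\le|\beta|+|\beta'|$, which the construction is arranged to keep $\le 26\sqrt{|M|}$; whatever lies beyond the shallow collar is bounded by short circles and is fed back in as new active discs. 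Since $d(\cdot,\beta)$ may have critical points inside the collar — so it is not literally an annulus and its slices are not literally intervals — one has to work with approximate versions of ``collar'' and ``slice'', and this is the step that needs the most care.

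Finally, I would argue that the recursion terminates — each sweep step and each cut removes area from the not-yet-processed part of $D$, against the fixed budget $|M|$ — so that $T_D$ is a finite trivalent tree with fibers of length $\le 26\sqrt{|M|}$ and the required fiber types; reattaching the cap then gives the theorem. I expect the main obstacle to be exactly the cutting lemma of the third paragraph: identifying the right dichotomy (short sweepout versus short cut), handling the degeneracies of the distance function along the way, and — most delicately — tracking the interruption threshold, the collar depth, the cut length, and the $\theta$-fiber length through the recursion so that all of them close at the single constant $26$, while keeping the recursion well-founded. By contrast, the topological bookkeeping, and the derivation of the boundary-constant Morse function of the abstract and of Theorem \ref{1/3 area} from $T$, should be comparatively routine.
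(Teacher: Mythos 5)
Your approach --- sweep inward by level sets of a distance function and cut whenever a fiber gets long --- is genuinely different from the paper's, which instead iterates Papasoglu's isoperimetric theorem (Theorem \ref{papasoglu}, extended to spheres with holes in Proposition \ref{subdivision}): at every stage the region is cut by a curve or arc-system of length $\le 2\sqrt{3}\sqrt{\mathrm{area}}$ into pieces each of area at most $\frac{3}{4}$ of the current area, so the fibers are bounded by the geometric series $\sum_i 2\sqrt{3}(\frac{3}{4})^{i/2}\sqrt{|M|}\le 26\sqrt{|M|}$ and the recursion terminates after finitely many levels. Your scheme has no analogous progress measure, and this is a genuine gap rather than a technicality. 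A cut removes no area, and a sweep can remove arbitrarily little area before the fibers again exceed the threshold (the level sets of $d(\cdot,\beta)$ can blow up immediately, e.g.\ at the mouth of a thin neck opening into a large bulb), so ``each sweep step and each cut removes area'' does not make the recursion well-founded: one can be forced to cut infinitely often while the unprocessed area stays bounded away from zero. A Besicovitch-type cut of a disc with boundary length $L$ and area up to $|M|$ only produces pieces of boundary length $\le \frac{3}{4}L+\sqrt{|M|}$; iterating drives boundary lengths toward $4\sqrt{|M|}$ without ever reducing area. This is precisely why the paper needs the nontrivial input that a short cut can be chosen to \emph{also} split the area into pieces of at most $\frac{3}{4}$ of the total.

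The cutting lemma you flag as the core also does not work as stated. Coarea gives you \emph{one} short level set at some depth $t_0<\sqrt{|M|}/13$; it does not confine the long fibers to a collar of that depth, since $t\mapsto|\{d(\cdot,\beta)=t\}|$ can oscillate and long level sets can recur at arbitrary depths (cutting at the first short level and recursing beyond it is fine, but the collar itself is not ``where all the bad fibers live''). More seriously, the ``rectangles'' into which you cut the shallow collar are discs whose boundaries contain arcs of $\beta$ and of the inner level set --- hence of length comparable to $13\sqrt{|M|}$ --- and whose areas are uncontrolled (the collar can carry almost all of $|M|$); asserting that such pieces ``admit genuinely short sweepouts'' is essentially the theorem being proved, so the step is circular. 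Finally, nothing in the proposal forces the various thresholds to close at $26$: the paper's $26$ is exactly $4\sqrt{3}(2+\sqrt{3})$ from the geometric series, while the distance-function sweepout strategy you describe is close in spirit to Guth's argument in \cite{G}, which yields $34$. To repair your route you would need, at minimum, a cutting lemma that provably decreases a complexity measure (area, as in the paper, or boundary length measured against a fixed small scale as in Lemma \ref{small area_S^2}) and a reassembly device like Lemma \ref{concatenation}.
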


Theorem \ref{tree} follows from a more general Theorem \ref{tree1} in Section 3
for spheres with $k \geq 0$ holes. Using different methods Guth \cite{G} proved existence
of a map from $M$ into a trivalent tree with lengths of fibers bounded in terms of hypersphericity of
$M$. It follows from Theorem 0.3 in \cite{G} that there exists such a map with fibers
of length $\leq 34 \sqrt{|M|}$.

The second main result of this paper is about slicing $M$ by $1-$cycles.
If instead of simple closed curves we allow subdividison by 1-cycles, we show that $M$ can be subdivided 
into two regions with arbitrary prescribed ratio of areas by a 1-cycle of length $\leq 52 \sqrt{|M|}$.
In fact, we prove the following

\begin{theorem} \label{morse}
There exists a Morse function $f:M \rightarrow \mathbb{R}$
with fibers of length $\leq 52 \sqrt{|M|}$.
\end{theorem}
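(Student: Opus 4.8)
The idea is to promote the tree‑valued map of Theorem~\ref{tree} to a genuine real‑valued Morse function by ``unfolding'' the target tree onto $\mathbb R$. Fix $f\colon M\to T$ as in Theorem~\ref{tree}: $T$ is a trivalent tree, and each fiber of $f$ is a point, a simple closed curve, or a $\theta$‑graph, of length $\le 26\sqrt{|M|}$. The plan is to choose a continuous map $\pi\colon T\to\mathbb R$ which is strictly monotone on every edge, set $h_0:=\pi\circ f$, and then perturb $h_0$ to a Morse function $h$. Since a regular level set of $h_0$ is a union of fibers of $f$, if $\pi$ can be arranged so that every value is attained by at most two points of $T$, then the level sets of $h_0$ have length $\le 2\cdot 26\sqrt{|M|}=52\sqrt{|M|}$ and the theorem follows. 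The final perturbation is routine: near a point‑fiber $h_0$ already models a nondegenerate extremum; near a $\theta$‑fiber over a trivalent vertex one replaces the degenerate critical set by a single nondegenerate saddle by a $C^0$‑small local modification; one then separates the critical values. Since these modifications are $C^0$‑small and supported near fibers of $f$, they change level‑set lengths by an arbitrarily small amount, which is then absorbed into the estimate.

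The main obstacle is the choice of $\pi$. It is \emph{not} true that every trivalent tree admits an edge‑monotone map to $\mathbb R$ that is everywhere at most two‑to‑one — the tree consisting of one trivalent vertex joined to three further trivalent vertices is a counterexample, since near each trivalent vertex two of its three edges necessarily emanate in the same direction and the three resulting subtrees cannot all be accommodated. So one cannot simply run the argument above for an arbitrary $T$. Instead I would return to the construction underlying Theorem~\ref{tree} (the more general Theorem~\ref{tree1} in Section~3), which produces $f$ by repeatedly cutting $M$ along short curves into pieces of decreasing area. Ordering these cuts by a depth‑first rule, one sweeps one branch of the resulting decomposition to completion before passing to the next, so that at every height the level set of the associated function consists of a single ``active'' circle together with the boundary curves of the pieces further up the current branch that have not yet been swept. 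The point is that those waiting curves bound a nested family of subregions, so their total contribution is controlled by the same isoperimetric estimates used for Theorems~\ref{1/3 area} and \ref{tree1}, and the total length at every height stays $\le 52\sqrt{|M|}$.

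Reconciling this bookkeeping with the requirement that the final function be Morse — in particular controlling the moments when the active circle passes through a $\theta$‑fiber, keeping the waiting circles genuinely short along every branch, and making all of this uniform over the decomposition — is where the real work lies; smoothing $h_0$, perturbing to nondegeneracy, and separating critical values are then standard.
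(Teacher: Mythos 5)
Your first observation --- that one cannot simply post-compose the $T$-map of Theorem~\ref{tree} with an edge-monotone, at most two-to-one map $T\to\mathbb{R}$ --- is correct, and your counterexample tree is genuine. Your fallback strategy (re-run the recursive decomposition behind Theorem~\ref{tree1}, sweeping one branch at a time while the boundaries of not-yet-swept pieces wait in the level set) is in fact the approach the paper takes in Theorem~\ref{morse1}. But the step you defer as ``where the real work lies'' is the entire content of the theorem, and the one quantitative claim you make about it is asserted, not proved, and does not close as stated. The waiting pieces along the current branch are \emph{disjoint} subregions $W_1,\dots,W_n$ (not a nested family), and $\partial W_i$ consists not only of the cut that created $W_i$ but potentially of portions of all earlier cuts $\gamma_1,\dots,\gamma_{i-1}$ and of $\partial M$ as well; estimating $\sum_i|\partial W_i|$ by $\sum_i\sum_{j\le i}|\gamma_j|$ gives a bound growing linearly in the depth of the decomposition, i.e.\ no bound at all. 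The accounting that actually works --- and is the source of the factor $2$ and hence of $52=2\cdot26$ --- is that each point of each cut curve lies on the boundary of at most two of the pieces contributing to any single level set, so the total length is at most $|\partial M|+2\sum_{i\ge0}2\sqrt{3}\,(3/4)^{i/2}\sqrt{|M|}=\bigl(16\sqrt{3}+24\bigr)\sqrt{|M|}\le 52\sqrt{|M|}$. You need to state this and propagate it through an induction; the paper's inductive bound (\ref{m-inductive statement}) is designed precisely so that only the geometric series of cut lengths is doubled, not the $|\partial M_k|$ term.

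The second omission is the local model at a cut. When a piece is severed by an arc $\alpha$, the level set must pass through a $\theta$-graph, and just after the saddle it consists of two circles of total length $|\partial D|+2|\alpha|$: the cut arc is traversed twice, the old boundary only once. This is the paper's Lemma~\ref{theta}, and it is exactly what confines the doubling to the cut curves. Without such a lemma, and without the machinery of Proposition~\ref{subdivision} and Lemma~\ref{horseshoe} for organizing arc-cuts on pieces that are no longer discs, the bookkeeping in your second paragraph cannot be carried out. In short: correct strategy and a correct diagnosis of why the naive route fails, but the proof of the stated constant is missing rather than merely compressed.
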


This improves the result of Balacheff and Sabourau \cite{BS}
 that there exists a sweep-out of $M$ by $1-$cycles of length $\leq 10^8 \sqrt{|M|}$.

Theorem \ref{morse} follows from a more general result
in Section 4, where we prove existence of a Morse function on a sphere with $k$ holes,
such that the function is constant on each connected component of the boundary
and the length of fibers are bounded in terms of area and boundary length.

Alvarez Paiva, Balacheff and Tzanev \cite{ABT} show that existence of a Morse function on a
Riemannian 2-sphere with bounded fibers
yields a length-area inequality for the shortest periodic geodesic on a Finsler 2-sphere
(for both reversible and non-reversible metrics). Note that arguments used by Croke \cite{C}
to prove the length-area bound for the shortest closed geodesic on a Riemannian 2-sphere (see also \cite{R}
for the best known constant) can not be directly generalized to the Finsler case
because co-area inequality fails for non-reversible Finsler metrics. 

Hence, Theorem \ref{morse} yields a better constant for Theorem VI in \cite{ABT}.
%

\begin{theorem} \label{finsler}
Let $M$ be a Finsler two-sphere with Holmes-Thompson area $A$. 
Then $M$ carries a closed geodesic of
length $\leq 160 \sqrt{A}$.
\end{theorem}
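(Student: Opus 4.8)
The plan is to deduce Theorem~\ref{finsler} from Theorem~\ref{morse} by feeding the latter into the min-max scheme of Alvarez Paiva, Balacheff and Tzanev~\cite{ABT} and tracking how the constant $52$ propagates. The skeleton is: (i) attach to the Finsler metric $F$ a Riemannian metric $g$ on $S^2$ with $\mathrm{Area}_g(S^2)$ comparable to $A$ and with $F$-lengths of curves dominated by $g$-lengths; (ii) apply Theorem~\ref{morse} to $(S^2,g)$ to get a Morse function whose fibers sweep out $S^2$ by $1$-cycles of controlled $F$-length; (iii) run a Birkhoff-type min-max over this sweep-out, with a curve-shortening process valid for possibly non-reversible Finsler metrics, to extract a closed geodesic whose length is bounded by the width of the family.

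For step (i), at each $x$ one replaces the (not necessarily centrally symmetric) unit ball $B_x\subset T_xS^2$ of $F$ by a centered ellipse $E_x\subseteq B_x$ chosen canonically (e.g.\ the maximal-area inscribed centered ellipse, or the Binet--Legendre ellipse); letting $g$ be the Riemannian metric with unit ball $E_x$ gives $\mathrm{length}_F(\gamma)\le \mathrm{length}_g(\gamma)$ for every curve $\gamma$, while $\mathrm{Area}_g(S^2)\le \kappa\, A$ for a universal $\kappa$, the latter being a Lebesgue-measure comparison between the polar bodies $B_x^\circ$ and $E_x^\circ$ (the Holmes--Thompson area being $\frac{1}{\pi}\int_{S^2}|B_x^\circ|$). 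By Theorem~\ref{morse} there is a Morse function $h:S^2\to\mathbb R$ with $\mathrm{length}_g(h^{-1}(t))\le 52\sqrt{\mathrm{Area}_g(S^2)}$, hence $\mathrm{length}_F(h^{-1}(t))\le 52\sqrt{\kappa}\,\sqrt A$ at every regular value $t$; so $(S^2,F)$ carries a sweep-out by $1$-cycles of $F$-length at most $52\sqrt{\kappa}\,\sqrt A$.

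Step (iii) is where non-reversibility must be dealt with, and it is the step I expect to be the real obstacle: Croke's co-area argument is unavailable since the co-area inequality fails for non-reversible Finsler metrics, so one instead runs the min-max directly on the sweep-out, which costs an extra universal factor $c_0$ in the length estimate; this is exactly the content of Theorem~VI of \cite{ABT}, whose hypothesis is supplied verbatim by step (ii). It then remains to check the arithmetic $52\, c_0 \sqrt{\kappa}\le 160$, which holds because the combined convex-geometry and min-max constant stays below $160/52\approx 3.08$. I would therefore present the proof as: invoke the framework of~\cite{ABT}, observe that Theorem~\ref{morse} improves its input, substitute, and verify the constant; the genuinely new ingredient is the fiber bound $52\sqrt{|M|}$ from Theorem~\ref{morse}, everything else being a bookkeeping of the convex-geometric and min-max constants already present in~\cite{ABT}.
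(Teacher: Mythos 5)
Your proposal coincides with the paper's proof, which consists entirely of citing Theorem~VI of \cite{ABT} and substituting the improved input from Theorem~\ref{morse} (a Morse function with fibers of length $\leq 52\sqrt{|M|}$, replacing the $10^8\sqrt{|M|}$ sweep-out of \cite{BS}); the paper offers no further detail and delegates the conversion factor taking $52$ to $160$ entirely to \cite{ABT}. Your steps (i)--(iii) are your own reconstruction of the internals of \cite{ABT} rather than something the paper verifies, but since both you and the author ultimately invoke Theorem~VI of \cite{ABT} as a black box with Theorem~\ref{morse} as its hypothesis, the approach is essentially the same.
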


The reason for constants $26$ and $52$ in our theorems is the following.
We obtain the desired slicing of the sphere by repeatedly using the result
of Papasoglu \cite{P} to subdivide the sphere into smaller regions by a curve of length 
at most $2 \sqrt{3}$
times the square root of the area of the region.
At each step the area of the region reduces at least by a factor of $\frac{3}{4}$.
We then assemble these subdividing curves into one foliation with lengths
bounded by the geometric progression
$$\sum_{i=0} ^{\infty} 2 \sqrt{3} (\frac{3}{4})^{i/2}  \sqrt{|M|}= 4\sqrt{3}(2+\sqrt{3}) \sqrt{|M|}
\leq 26 \sqrt{|M|}$$
In the proof of Theorem \ref{morse} some subdividng curves are used twice so 
an additional factor of $2$ appears.

After the first subdivision happens our regions are no longer spheres,
but rather spheres with a finite number of holes.
It may not be possible to find a short simple closed curve
subdividing it into two parts of area $\leq \frac{3}{4}$ of its area.
Instead we may have to use a collection of arcs with endpoints
on the boundary subdividng the region into many pieces, each of small area.
The issue is then how to assemble all of these subdividing curves into one foliation.

The main technical result of this paper (proved in the next section, see Proposition \ref{subdivision})
is that we can always choose these subdividing arcs in such a way that
they belong to a single connected component of the boundary of a certain subregion
$A_1$ with area of $A_1$ between $\frac{1}{4}$ and $\frac{3}{4}$
of the area of the region. This result make assembling curves into one foliation 
a straightforward procedure.

After the first version of this article appeared on the web,
F. Balacheff in \cite{B} improved constant $26$ in Theorems 1 and 2 
to $7.6$, and constant $160$ in Theorem 4 to $31.1$,
but not constant $52$ in Theorem 3.

\vspace{0.2in}

\noindent
\textbf{Acknowledgements.} 
The author is grateful to Alexander Nabutovsky for reading 
the first draft of this paper and to Juan Carlos Alvarez Paiva, Florent Balacheff,
Gregory Chambers and Regina Rotman for valuable discussions.
The author would like to thank the referee for helpful comments.
The author acknowledges the support by Natural Sciences and Engineering 
Research Council (NSERC) CGS graduate scholarship, and by Ontario Graduate Scholarship.

\section{Subdivision by short curves}

Let $M$ be a Riemannian 2-sphere.
For $0<r\leq \frac{1}{2}$ let $S_r(M)$ denote the set of simple closed curves on $M$
that divide it into subdiscs of area $\geq r |M|$.
Define $c(M,r) = \inf _{\gamma \in S_r(M)}|\gamma|$ and $c(r)= \sup c(M,r)$, where the
supremum is taken over all metrics on $S^2$ of area $1$.

By definition $c(r)$ is increasing and by Proposition 
\ref{semicontinuity} below it
is lower semicontinuous. For any $r> \frac{1}{3}$ it follows from the example in the introduction
(see Figure \ref{starfish}) that $c(r) = \infty$. For $r= \frac{1}{4}$ we have the following result of 
Papasoglu.

\begin{theorem} \label{papasoglu}
{(Papasoglu, \cite{P})} $c(\frac{1}{4}) \leq 2 \sqrt{3}$
\end{theorem}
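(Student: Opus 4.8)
The plan is to deduce Theorem~\ref{papasoglu} from a self‑improvement principle for a near‑optimal bisecting curve, with the Besicovitch inequality supplying the only geometric estimate. I would first check that $S_{1/4}(M)$ is nonempty, so that $c(M,\tfrac14)$ is a finite number: pulling back the height function of the round sphere by a diffeomorphism $M\to S^2$ produces a sweep‑out of $M$ by connected simple closed curves $\gamma_t$, $t\in[0,1]$, with $\gamma_0$ and $\gamma_1$ points, and choosing $t$ so that the disc bounded by $\gamma_t$ on one side has area in $[\tfrac14|M|,\tfrac34|M|]$ gives a curve in $S_{1/4}(M)$. It therefore suffices to bound $c:=c(M,\tfrac14)<\infty$.

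Then I would fix $\epsilon>0$ and a curve $\gamma\in S_{1/4}(M)$ with $|\gamma|\le c+\epsilon$, bounding discs $D_1$ and $D_2$ with $|D_i|\ge\tfrac14|M|$ (hence $|D_i|\le\tfrac34|M|$); relabel so that $|D_2|\ge\tfrac12|M|$, so that $\tfrac12|M|\le|D_2|\le\tfrac34|M|$. Subdivide $\gamma$ into four consecutive arcs $\alpha_1,\beta_1,\alpha_2,\beta_2$ of equal length $\tfrac14|\gamma|$ and regard $D_2$ as a ``Riemannian square'' with these four sides. The Besicovitch inequality gives
$$|D_2|\ \ge\ d_{D_2}(\alpha_1,\alpha_2)\cdot d_{D_2}(\beta_1,\beta_2),$$
so one of the factors, say $d_{D_2}(\alpha_1,\alpha_2)$, is at most $\sqrt{|D_2|}$. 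Let $\tau\subset D_2$ be a path from $\alpha_1$ to $\alpha_2$, which (perturbing if necessary) we take to be embedded and to meet $\gamma$ only at its two endpoints $s\in\alpha_1$ and $e\in\alpha_2$, with $|\tau|\le\sqrt{|D_2|}+\epsilon$.

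The arc $\tau$ cuts $D_2$ into two sub‑discs whose areas sum to $|D_2|\ge\tfrac12|M|$; let $P$ be one of them with $|P|\ge\tfrac14|M|$. Since $P\subset D_2$ we also have $|P|\le\tfrac34|M|$, so the simple closed curve $\gamma':=\partial P$ belongs to $S_{1/4}(M)$. Writing $\gamma'=\tau\cup\sigma$, where $\sigma$ is one of the two subarcs into which $s$ and $e$ divide $\gamma$, and using that $s,e$ lie on the \emph{opposite} quarter‑arcs $\alpha_1,\alpha_2$ — so that $\sigma$ contains exactly one whole $\beta_i$ together with sub‑pieces of $\alpha_1$ and $\alpha_2$ — we get $|\sigma|\le\tfrac34|\gamma|$. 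Therefore
$$c\ \le\ |\gamma'|\ \le\ |\tau|+|\sigma|\ \le\ \sqrt{|D_2|}+\epsilon+\tfrac34|\gamma|\ \le\ \sqrt{\tfrac34|M|}+\tfrac34 c+\tfrac74\epsilon.$$
Rearranging yields $\tfrac14 c\le\tfrac{\sqrt3}{2}\sqrt{|M|}+\tfrac74\epsilon$, and letting $\epsilon\to0$ gives $c(M,\tfrac14)\le 2\sqrt3\,\sqrt{|M|}$; taking the supremum over unit‑area metrics on $S^2$ gives $c(\tfrac14)\le 2\sqrt3$.

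The step I expect to carry the real content is the verification that $\gamma'$ is again an admissible competitor \emph{while} its $\gamma$‑part has length at most $\tfrac34|\gamma|$: this is what forces the two choices in the construction — cutting the \emph{larger} of the two discs, and slicing between \emph{opposite quarter‑arcs} — and it is precisely the reason the recursion closes with the constant $2\sqrt3=4\sqrt{3/4}$. The remaining ingredients are routine: the Besicovitch inequality for a disc with four marked boundary arcs (the form used in \cite{P}), the replacement of $\tau$ by an embedded arc disjoint from $\gamma$ off its endpoints, and the $\epsilon$‑bookkeeping, which one could instead bypass using the lower semicontinuity of $c(\,\cdot\,,\tfrac14)$ (Proposition~\ref{semicontinuity}). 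An alternative proof, not based on this self‑improvement scheme, is the filling‑type argument of Balacheff and Sabourau \cite{BS}.
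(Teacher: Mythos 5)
Your argument is correct, but note that the paper does not prove this statement at all: Theorem \ref{papasoglu} is quoted as a black box from Papasoglu's paper \cite{P}, so there is no internal proof to compare against. What you have written is essentially a faithful reconstruction of Papasoglu's own argument — take a near-minimizer $\gamma\in S_{1/4}(M)$, view the larger disc as a Riemannian square on the four equal quarter-arcs of $\gamma$, extract a Besicovitch shortcut of length at most $\sqrt{|D_2|}\le\sqrt{\tfrac34|M|}$ between opposite quarter-arcs, and observe that the resulting competitor closes the recursion $c\le\tfrac34 c+\tfrac{\sqrt3}{2}\sqrt{|M|}$, giving $2\sqrt3$. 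The two genuinely load-bearing points — that cutting the \emph{larger} disc guarantees one of the two new pieces has area at least $\tfrac14|M|$ while both pieces stay below $\tfrac34|M|$, and that landing on \emph{opposite} quarter-arcs forces the retained portion of $\gamma$ to have length at most $\tfrac34|\gamma|$ — are both present and correctly justified. The only step I would flesh out is the reduction to an embedded shortcut meeting $\gamma$ exactly at one point of $\alpha_1$ and one point of $\alpha_2$: a near-minimizing path may run along $\beta_1\cup\beta_2$, so one should truncate at the last exit from $\alpha_1$ and first entry to $\alpha_2$, push the interior off $\partial D_2$ using a collar, and delete loops; this costs only $\epsilon$ in length and is what your ``perturbing if necessary'' must mean.
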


We will need to generalize Theorem \ref{papasoglu} to spheres with finitely many holes and allow a larger class 
of subdividing curves than just simple closed curves. When the surface has boundary we will allow 
the subdividing curve $\gamma$ to consist of several arcs with endpoints on the boundary.
In this case we will define a distinguished connected component $A_1$ of $M \setminus \gamma$
and require that $\gamma$ is contained in a connected component of $\partial A_1$. This is 
a technical condition that will make it easer to repeatedly cut the surface into smaller pieces 
and concatenate the subdivding curves to obtain a slicing of $M$.

Let $M_k$ be a complete Riemannian 2-surface with boundary homeomorphic to a sphere
with $k$ holes. 
Let $\gamma$ be a simple closed curve in the interior of $M_k$
or a union of finitely many arcs $\gamma= \bigcup \gamma_i$, where $\gamma_i$
are arcs with endpoints on $\partial M_k$ that do not pairwise intersect and have no 
self-intersections. Let $\{ A_i \}$ be the set of connected components of $M_k \setminus \gamma$.
Let $S_{M_k}(r)$ denote the set of all such $\gamma$ on $M_k$ that in addition satisfy

\begin{enumerate}
\item
$r |M_k| \leq |A_1| \leq (1-r) |M_k|$

\item
$\gamma$ is contained in a connected component of $\partial A_1$
\end{enumerate}

In particular, (1) implies that the area of every connected component of $M_k \setminus \gamma$
is bounded from above by $(1-r) M_k$. 

Define $c(M_k,r) = \inf _{\gamma \in S_r(M_k)}|\gamma|$ and $c_k(r)= \sup c(M_k,r)$, where the
supremum is taken over all metrics on a sphere with $k$ holes that have area $1$.

We have the following useful fact.

\begin{proposition} \label{semicontinuous}
$c_k(r)$ and $c(r)$ are lower semi-continuous for $r \in (0, \frac{1}{3}]$.
\end{proposition}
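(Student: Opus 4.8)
The plan is to reduce the statement to a left-continuity property of the subdivision profile of a single fixed metric, and then to prove that property by a compactness argument in the space of sets of finite perimeter, analogous to standard proofs of continuity of isoperimetric profiles. Since $c_k$ and $c$ are non-decreasing in $r$, lower semicontinuity at $r$ is equivalent to the inequality $\lim_{s\to r^-}c_k(s)\ge c_k(r)$ (for a non-decreasing function the one-sided limit from the right is $\ge c_k(r)$, so only the limit from the left matters), the reverse inequality being automatic. So it suffices to produce, for every $\varepsilon>0$, some $s<r$ with $c_k(s)\ge c_k(r)-\varepsilon$ (or $c_k(s)\ge 1/\varepsilon$ when $c_k(r)=\infty$). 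By definition of $c_k$ as a supremum over area-$1$ metrics, fix a metric $g$ on a sphere with $k$ holes for which $c((M_k,g),r)$ is (necessarily finite and) as close to $c_k(r)$ as we wish, or as large as we wish if $c_k(r)=\infty$. Since $c_k(s)\ge c((M_k,g),s)$ for every $s$, it then suffices to find $s<r$ with $c((M_k,g),s)\ge c((M_k,g),r)-\varepsilon/2$; the argument for $c$ is identical.

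Assume no such $s$ exists. Choose $s_n\uparrow r$ and $\gamma^{(n)}\in S_{s_n}(M_k)$ with $|\gamma^{(n)}|<L:=c((M_k,g),r)-\varepsilon/2$, with associated distinguished region $A^{(n)}$, so that $s_n|M_k|\le|A^{(n)}|\le(1-s_n)|M_k|$ and the perimeter of $A^{(n)}$ relative to the interior of $M_k$ is at most $|\gamma^{(n)}|<L$. The sets $A^{(n)}$ have uniformly bounded perimeter, and they are tight since $M_k$ has finite area; hence, by $\mathrm{BV}$-compactness, a subsequence satisfies $\mathbf 1_{A^{(n)}}\to\mathbf 1_U$ in $L^1$ for a set of finite perimeter $U$. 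By lower semicontinuity of perimeter the relative perimeter of $U$ is at most $\liminf|\gamma^{(n)}|\le L$, and since $s_n\to r$ we obtain $r|M_k|\le|U|\le(1-r)|M_k|$.

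The remaining task is the following geometric claim, whose consequence is an element $\gamma\in S_r(M_k)$ with $|\gamma|<c((M_k,g),r)$ --- contradicting the definition of $c((M_k,g),r)$ as the infimum of lengths over $S_r(M_k)$: \emph{for $r<\tfrac12$, every set of finite perimeter $U\subset M_k$ with $r|M_k|\le|U|\le(1-r)|M_k|$ admits, for each $\delta>0$, an element of $S_r(M_k)$ whose length is at most the relative perimeter of $U$ plus $\delta$.} To prove it, first approximate $U$ in $L^1$ and in relative perimeter by a smooth open set $V\subset M_k$ with boundary transverse to $\partial M_k$; then adjoin or delete a small geodesic ball so that $r|M_k|\le|V|\le(1-r)|M_k|$, at the cost of at most $\delta/2$ in total length --- here the hypothesis $r\le\tfrac13$ (or merely $r<\tfrac12$) leaves room in the complement for this. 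Now $\partial V\cap\mathrm{int}\,M_k$ is a finite disjoint union of embedded circles and of embedded arcs with endpoints on $\partial M_k$, of total length at most the relative perimeter of $U$ plus $\delta$. Using that $M_k$ has genus $0$, one extracts from this arrangement a single separating circle, or an admissible sub-collection of arcs confined to one boundary component of the region it cuts off, whose cut-off region $A_1$ satisfies $r|M_k|\le|A_1|\le(1-r)|M_k|$: an outermost-region argument shows that otherwise $V$, whose area lies in that range, would be forced to lie inside a region of area $<r|M_k|$, or to contain a region of area $>(1-r)|M_k|$, neither of which is possible.

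I expect this last extraction step to be the main obstacle, because a limit of embedded curves (or of admissible families of arcs) need not itself be embedded or admissible: one must show that an arbitrary finite-perimeter competitor of the correct area can always be replaced, without increasing the length, by a genuine admissible subdividing curve satisfying the requirement that the cut lie inside a single boundary component of the distinguished region. The combinatorial bookkeeping when $k\ge1$, and the borderline case in which $|U|$ equals exactly $r|M_k|$, are where the care is needed.
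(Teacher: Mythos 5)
Your reduction to left-continuity of the fixed-metric profile is fine and matches the logical shape of the paper's argument, but the step you yourself flag as ``the main obstacle'' is not a technical gap to be filled: the geometric claim it rests on is false. An arbitrary set of finite perimeter $U\subset M_k$ with $r|M_k|\le |U|\le (1-r)|M_k|$ does \emph{not} in general admit an element of $S_r(M_k)$ of length at most its relative perimeter plus $\delta$. Take the three-legged starfish of Figure \ref{starfish} (no holes, so admissible competitors are single simple closed curves): let the body have area $\tfrac12$, each tentacle area $\tfrac16$, and the necks very thin, and let $U$ be the body. Then $|U|=\tfrac12\in[\tfrac13,\tfrac23]$ and the relative perimeter of $U$ is three tiny neck circles, yet every simple closed curve splitting the sphere into two discs of area $\ge\tfrac13$ (indeed even $\ge\tfrac14$) must cross the body, since a curve cutting off part of one tentacle encloses at most $\tfrac16$; its length is therefore bounded below by a constant depending on the body but not on the neck size. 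So the infimum over $S_r$ can exceed the perimeter of a competitor of the right area by an arbitrarily large amount, and your contradiction cannot be derived from the limit set $U$ alone. The root cause is that the $L^1$/BV limit discards exactly the structure that makes the $\gamma^{(n)}$ admissible (a single embedded circle, or arcs lying in one boundary component of the distinguished region); after passing to an arbitrary finite-perimeter set, that structure cannot be recovered without paying in length, which is precisely what Proposition \ref{subdivision} and Lemma \ref{2-gon} have to work hard for, and only under $r\le\tfrac14$. (A minor additional point: your parenthetical that $c((M_k,g),r)$ is ``necessarily finite'' is not available at this stage of the paper; finiteness at $r=\tfrac13$ is essentially Theorem \ref{1/3 area}.)

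The paper avoids compactness entirely and argues in the direction you abandoned: for a fixed metric and $r_n<r$ close enough (namely $r-r_n<(1-r)\delta^2$), it takes a curve admissible for the ratio $r_n$ of length $\le c_k(r_n)+\epsilon/2$ and modifies it locally. One finds a ball $B_\delta(x)$ away from $\partial M_k$ in which the larger region has area at least $\tfrac32(1-r)\delta^2$ and which meets the subdividing curve, then reroutes the curve along arcs of $\partial B_\delta(x)$ so that the whole ball joins the smaller region; this adds at least $(1-r)\delta^2>r-r_n$ of area to the small side at a length cost $\le 2|\partial B_\delta(x)|<\epsilon/2$, yielding directly $c(M_k,r)\le c_k(r_n)+\epsilon$ and hence lower semicontinuity. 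If you want to salvage your compactness route, you would have to take limits of the admissible curves or arcs themselves (not just of the regions) and control embeddedness and the single-boundary-component condition in the limit, which is substantially more delicate than the paper's perturbation argument.
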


\begin{proof}

\begin{figure} 
   \centering	
	\includegraphics[scale=0.5]{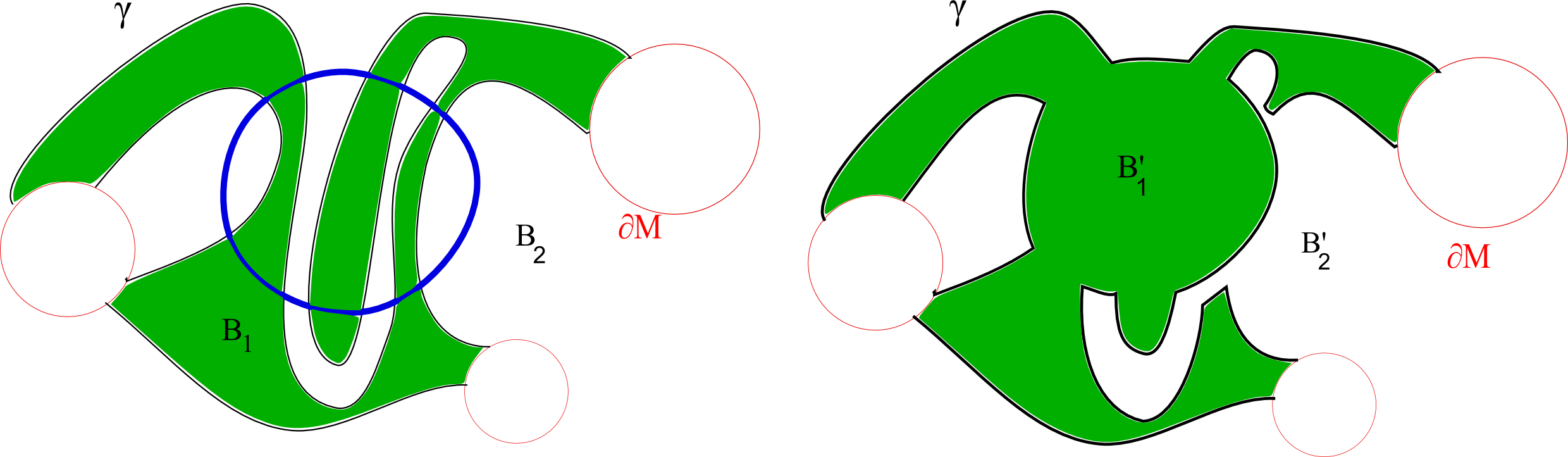}
	\caption{``Fattening" the smaller region} \label{semicontinuity}
\end{figure}

We prove the result for $c_k(r)$ and for $c(r)$ it will follow as a special case 
from the argument below.

Let $\{r_n\}$ be an increasing sequence converging to $r$ for some $0<r\leq \frac{1}{3}$.
Fix $\epsilon >0$. Let $M_k$ be a complete Riemannian surface of area $1$ diffeomorphic to
a sphere with $k$ holes. We would like to show that for some $r_n$ there exists
$\gamma \in S_r(M_k)$ with $|\gamma|\leq c_k(r_n)+\epsilon$.

We can find $\delta>0$ small enough so that it satisfies the following requirements:

\begin{enumerate}
\item
The area of the $\delta$-tubular neighbourhood of $\partial M$
satisfies $|N_{\delta}(\partial M_k)|< \frac{r}{100}$.

\item
Any ball $B_{\delta}(x)$ around $x \notin N_{\delta}(\partial M_k)$
is bilipschitz diffeomorphic to the Euclidean 
disc of radius $\delta$ with Lipschitz constant between $0.99$ and $1.01$. 
In particular, $3 \delta ^2 < |B_{\delta}(x)|< 4 \delta^2$
and $|\partial B_{\delta}(x)| < 8 \delta$.

\item
$\delta < \frac{\epsilon}{100}$
\end{enumerate}

For such a $\delta$ choose $r_n$ so that $r-r_n<(1-r) \delta ^2$.
Let $\gamma \in S_{M_k}(r_n)$ be the subdividing arcs of length $\leq c_k(r_n) + \epsilon/2$.
In the following argument it will be more convenient to consider $\gamma'$, the connected
component of $\partial A_1$ that contains $\gamma$
(recall that by definition of $c_k(r)$ 
$\gamma$ lies in 
a single boundary component of $\partial A_1$). If $\gamma$ is a closed curve then $\gamma' = \gamma$.
If $\gamma$ is a union of arcs then $\gamma'$ is a closed curve made out of arcs of $\gamma$ and arcs of 
$\partial M$.

Let $B_1$ denote the element of $\{A_1, M \setminus A_1 \}$ of smaller area and 
$B_2$ denote the element of larger area. We can assume that
$|B_1|< r$ for otherwise we are done. Therefore, we have $r_n \leq |B_1|<r$
and $1-r< |B_2|\leq 1-r_n$.

Let $M'$ denote $M \setminus N_{\delta}(\partial M_k)$. 
By our choice of $\delta$ we have that the area of a ball $|B_{\delta}(x)|\geq 3 \delta^2$
for $x\in M'$ and $|M' \cap B_2|\geq 0.99 |B_2|$.
By Fubini's theorem we obtain 
$\int _{M'} |B_{\delta}(x) \cap B_2| = |B_2 \cap M'| \int _{M'} |B_{\delta}(x)| \geq 3/2(1-r) |M'| \delta^2$.
Hence, for some $x \in M'$ we have $|B_{\delta}(x) \cap B_2| \geq 3/2(1-r) \delta^2$.
Since $M' \cap B_1$ is non-empty we can always find such a ball so that $\gamma' \cap B_{\delta}(x)$ is non-empty.

We will now construct a new curve $\beta$ that coincides with $\gamma'$ outside of $B_{1.1\delta}(x)$
and divides $M_k$ into regions $B_1'$ and $B_2'$ so that one of them is connected and each of them has area $\geq r$.
Moreover, $|\beta| < |\gamma'| + \epsilon/2$. This implies the desired inequality 
$c(r,M_k)\leq c(r_n,M_k)+ \epsilon$.

We construct $\beta$ by cutting $\gamma'$ at the points of intersection with $\partial B_{\delta}(x)$
and attaching arcs of $\partial B_{\delta}(x)$. We do it in such a way that $B_{\delta}(x)$ is now
entirely contained in the smaller of two regions. This increases the area of the smaller region
by at least $(1-r) \delta^2$ and increases the length of the subdividing curve by at most $2|\partial B_{\delta}(x)|
\leq 16 \delta < \epsilon/2$.

The procedure is illustrated in Figure \ref{semicontinuity}. 
Let $\{C_j \}$ be connected components of $B_1 \setminus B_{\delta}(x)$ and $\{c_i ^j\}$ denote connected 
components of $\partial C_j \cap \partial B_{\delta}(x)$. 
First we erase all arcs of $\gamma'$ that are in $B_{\delta}(x)$. For each $j$ we erase the arc $c^j _1$
from $\partial B_{\delta}(x)$. For each $c^j _i$, $i > 1$, we add a copy of $c^j _i$ and 
perturb it so that the new curve $\beta$ does not intersect $B_{\delta}(x)$ in the neighbourhood
of $c^j _i$. This does not increase the number of connected components of either region.

\end{proof}

The only example that I know where upper semicontinuity of $c(r)$ fails is the three-legged starfish
on Figure \ref{starfish}
(showing $c(r) = \infty$ for $r> \frac{1}{3}$, while $c(1/3) \leq 26$ by Theorem \ref{1/3 area}).
The following question seems natural: 

\vspace{0.1in}
\noindent
\textbf{Question} Is $c(r)$ continuous for $r \in (0,\frac{1}{3})$?
\vspace{0.1in}

It can be easily shown from the definition that 
$c(r) \leq c_k(r)$. Under the additional assumption
$r\leq \frac{1}{4}$ we are able to prove that they are
equal.

\begin{proposition} \label{subdivision}
Suppose $r\leq \frac{1}{4}$, then $c_k(r)= c(r) $.
\end{proposition}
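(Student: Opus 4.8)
The inequality $c(r) \leq c_k(r)$ is already observed in the text, so the content is the reverse: given a Riemannian sphere with $k$ holes $M_k$ of area $1$ and $\varepsilon > 0$, I must produce a subdividing curve $\gamma \in S_{M_k}(r)$ of length at most $c(r) + \varepsilon$. The plan is to \emph{cap off} the holes of $M_k$: glue a disc to each of the $k$ boundary circles to obtain a genuine Riemannian $2$-sphere $\widehat{M}$. The caps should be chosen with total area $< \varepsilon'$ (for a small $\varepsilon'$ to be fixed at the end) — for instance, long thin ``spikes'' or flat discs scaled down — so that $|\widehat{M}| = 1 + o(1)$, and rescaling to unit area changes lengths by a factor $1 + o(1)$. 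Then apply the definition of $c(r)$ to $\widehat{M}$: there is a simple closed curve $\widehat\gamma$ of length $\leq c(r) + \varepsilon/2$ dividing $\widehat M$ into two discs, each of area $\geq r|\widehat M| > r$.

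The issue is that $\widehat\gamma$ may pass through the caps, so it is not a curve on $M_k$. The key step is to push $\widehat\gamma$ off the caps. Since $\widehat\gamma$ is a simple closed curve and each cap $D_i$ is a disc glued along $\partial_i M_k$, the intersection $\widehat\gamma \cap D_i$ is a disjoint union of arcs with endpoints on $\partial_i M_k$; replace each such arc by an arc running along $\partial_i M_k$ (more precisely, by a subarc of $\partial_i M_k$ slightly pushed into $M_k$), at the cost of adding at most $length(\partial_i M_k)$ per arc. To control this cost, I first choose the caps small enough and, more importantly, use the freedom in the metric near the old boundary: before capping, one may assume (after an arbitrarily small perturbation of the metric or by working in a collar) that $length(\partial M_k)$ — or at least the length available to $\widehat\gamma$ near $\partial M_k$ — is as small as we like; alternatively, slide $\widehat\gamma$ by a length-nonincreasing isotopy (shortening) so that it meets each $\partial_i M_k$ minimally. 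The resulting curve $\gamma$ lies in $M_k$, is a union of simple closed curves and arcs with endpoints on $\partial M_k$, has length $\leq c(r) + \varepsilon$, and bounds the same two regions up to the (negligible) cap areas, so each component of $M_k \setminus \gamma$ has area in $[r - o(1), 1 - r + o(1)]$; after relabelling one region as $A_1$, condition (1) holds.

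The real obstacle is condition (2): $\gamma$ must lie in a single connected component of $\partial A_1$. This is exactly where the hypothesis $r \leq \tfrac14$ enters. Having produced a candidate collection of curves and arcs $\gamma$ with the area constraint (1), I invoke Proposition \ref{subdivision}'s engine, namely Proposition \ref{subdivision} is meant to reduce to the main technical Proposition (\ref{subdivision} cites the same mechanism as the construction described in the introduction): when $r \leq \tfrac14$ one can always rechoose the subdividing arcs so that they all sit on one boundary component of a region of area between $r$ and $1-r$. Concretely, among the components of $M_k \setminus \gamma$, group them into two ``sides'' of $\gamma$; since each side has small area ($\leq 1-r$) and the total is $1$, and since $r \leq 1/4$, one can merge components across inessential arcs until the remaining essential part of $\gamma$ lies on the boundary of one component $A_1$ with $r \leq |A_1| \leq 1-r$ — discarding an arc of $\gamma$ only decreases length, so the bound $|\gamma| \leq c(r)+\varepsilon$ is preserved. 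Finally, let $\varepsilon \to 0$: we get $c(M_k, r) \leq c(r)$, hence $c_k(r) \leq c(r)$, and combined with the trivial inequality this yields $c_k(r) = c(r)$. I expect the delicate point to be making the ``merge inessential arcs / choose $A_1$'' step rigorous while simultaneously keeping the area of $A_1$ pinned in $[r, 1-r]$; the constraint $r \leq \tfrac14$ is what guarantees enough slack (the ``other side'' has area $\geq r$ automatically once $|A_1| \leq 1-r \leq 3r$, roughly), and this is presumably why the stronger statement fails — or is at least not proved — for $r \in (\tfrac14, \tfrac13]$.
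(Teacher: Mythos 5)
Your setup (cap the holes with discs of tiny area, apply the definition of $c(r)$ to the resulting sphere, then convert the simple closed curve $\widehat\gamma$ into an admissible $\gamma$ on $M_k$) matches the paper's, but the two places where you hedge are exactly where the content lies, and your proposed fixes do not work. First, you should not ``push $\widehat\gamma$ off the caps'' by adding arcs of $\partial M_k$ --- and you cannot afford to: $M_k$ is a given surface, $|\partial M_k|$ is not under your control, and perturbing its metric changes the problem. The definition of $S_{M_k}(r)$ only charges you for the arcs of $\gamma$ lying in the interior of $M_k$; the connected component of $\partial A_1$ containing $\gamma$ is allowed to contain arcs of $\partial M_k$ for free. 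So the correct move is simply to take $\gamma=\widehat\gamma\cap\mathrm{int}(M_k)$, which only shortens the curve; no length is ever added.

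Second, and more seriously, condition (2) --- that the surviving arcs lie on a single connected component of $\partial A_1$ for some component $A_1$ with $r\leq|A_1|\leq 1-r$ --- is not obtained by a generic ``merge inessential arcs'' procedure; your description of that step is circular (you invoke ``Proposition \ref{subdivision}'s engine'' inside the proof of Proposition \ref{subdivision}) and you concede you do not know how to make it rigorous. The paper's mechanism is Lemma \ref{2-gon}: an innermost-type argument producing an arc $a_1\subset\widehat\gamma$ and an arc $b_1\subset\partial M_k$ bounding a disc $D_{a_1\cup b_1}$ inside one side $A$ of $\widehat\gamma$ such that $D_{a_1\cup b_1}\cap M_k$ is \emph{connected}. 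Then either this connected piece has area at least $r(1+\epsilon)$, in which case it serves as $A_1$ and $\gamma=a_1\cap\mathrm{int}(M_k)$ lies on one boundary component of it; or its area is less than $r(1+\epsilon)$, in which case one replaces $a_1$ by a push-off of $b_1$, transferring that small piece to the other side and strictly decreasing the number of connected components of $M_k\setminus\gamma$, and iterates. The hypothesis $r\leq\frac14$ enters precisely here: the larger side has area at least $\frac12(1+\epsilon)$, and after removing a piece of area less than $r(1+\epsilon)$ it still has area at least $(\frac12-r)(1+\epsilon)\geq r(1+\epsilon)$ because $\frac12-r\geq r$. (Your inequality ``$1-r\leq 3r$'' points the wrong way: it holds only for $r\geq\frac14$.) Without Lemma \ref{2-gon} and this dichotomy, the argument has a genuine gap at its central step.
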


In the proof of Proposition \ref{subdivision} we will use the following topological fact.

\begin{lemma} \label{2-gon}
Let $M$ be a submanifold (with boundary) of $S^2$ and let $\gamma$ be a simple closed curve in $S^2$.
Suppose the intersection of $\gamma$ and $\partial M$ is non-empty and transversal.
If $A$ denotes a connected component of $S^2 \setminus \gamma$
then there exists an arc $a \subset \gamma$ and an arc $b \subset \partial M$, such that
$a \cup b$ bounds a disc $D_{a \cup b} \subset A$ and
$D_{a \cup b} \cap M$ is connected.
\end{lemma}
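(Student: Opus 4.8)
The plan is to translate the statement, via the Jordan--Schoenflies theorem, into a combinatorial fact about a properly embedded $1$-manifold in a disc, and then to produce the bigon by an innermost-region argument, treating the embedded circles separately by induction. So I would first fix the setup: $\overline{A}$ is a closed disc with $\partial\overline{A}=\gamma$, and since $\gamma$ meets $\partial M$ transversally, $F:=\partial M\cap\overline{A}$ is a compact $1$-manifold properly embedded in $\overline{A}$ with $\partial F=\gamma\cap\partial M$ a finite nonempty set. Hence $F$ is a disjoint union of finitely many arcs (each with its two endpoints on $\gamma$ and interior in $\operatorname{int}\overline{A}$) and finitely many circles in $\operatorname{int}\overline{A}$, and $F$ has at least one arc. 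I would use two soft facts throughout: every component of $\overline{A}\setminus F$ lies either in $\operatorname{int}M$ or in $S^2\setminus M$ (being connected and disjoint from $\partial M$); and for a topological disc $D\subseteq\overline{A}$ one has $\operatorname{int}(D)\cap\partial M=\operatorname{int}(D)\cap F$.

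Next I would cut $\overline{A}$ open along the \emph{arcs} of $F$ only. Since every properly embedded arc separates a disc, this decomposes $\overline{A}$ into $m+1$ closed discs ($m\ge 1$ the number of arcs of $F$), and the dual graph of the decomposition is a tree, hence has at least two leaves. A leaf is a disc $P$ meeting exactly one arc of $F$, so $\partial P=\alpha\cup a$ with $\alpha$ a single arc of $F$ and $a$ a single arc of $\gamma$; moreover $\operatorname{int}(P)$ contains no arc of $F$, and $\operatorname{int}(a)$ contains no point of $\gamma\cap\partial M$ (such a point would be the endpoint of another arc of $F$ running into $\operatorname{int}(P)$). I call such a $P$ a \emph{candidate bigon}; there are at least two. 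If some candidate bigon $P$ additionally has $\operatorname{int}(P)\cap F=\emptyset$, then $\operatorname{int}(P)$ is monochromatic: either $\operatorname{int}(P)\subseteq\operatorname{int}M$, and then $P\cap M=P$ is a disc; or $\operatorname{int}(P)\cap M=\emptyset$, and then $P\cap M=\alpha\cup(a\cap M)=\alpha$ (here $a\cap M$ consists only of the two endpoints of $\alpha$, since $\operatorname{int}(a)$ carries no point of $\partial M$ and lies outside $M$). In both cases $P\cap M$ is connected, and we take $a$, $b=\alpha$, $D_{a\cup b}=P$.

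It remains to guarantee that some candidate bigon contains no part of $F$ in its interior; as candidate bigons never contain arcs of $F$ in their interiors, the only obstruction is circles of $F$, which I would remove by induction on the number of circles. If $F$ has a circle, take an innermost one $C$, bounding a disc $D_C\subseteq\operatorname{int}\overline{A}$ with $\operatorname{int}(D_C)\cap F=\emptyset$, so $\operatorname{int}(D_C)$ is monochromatic. If $\operatorname{int}(D_C)\cap M=\emptyset$ (a ``bubble''), set $M':=M\cup D_C$; this is again a submanifold with boundary, with $\partial M'\cap\overline{A}=F\setminus C$ having one fewer circle and $\gamma\cap\partial M'=\gamma\cap\partial M\neq\emptyset$. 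By induction there is a bigon $(a,b,D)$ for $M'$ with $D\cap M'$ connected. Since $C$ is disjoint from $\gamma$ and from the component of $\partial M'$ carrying $b$, it is disjoint from $\partial D$, so $D_C$ is either disjoint from $D$ or contained in $\operatorname{int}(D)$; in the first case $D\cap M=D\cap M'$, and in the second $D_C\subseteq\operatorname{int}(D)\cap\operatorname{int}(M')\subseteq\operatorname{int}(D\cap M')$, and deleting the open disc $\operatorname{int}(D_C)$ from the connected surface $D\cap M'$ keeps it connected (reroute any path through $\operatorname{int}(D_C)$ along $C\subseteq D\cap M$), so $D\cap M=(D\cap M')\setminus\operatorname{int}(D_C)$ is connected. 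If instead $\operatorname{int}(D_C)\subseteq\operatorname{int}M$, then $D_C$ is forced to be a disc component of $M$; I would pass to $M':=M\setminus D_C$ (again one fewer circle) and choose the bigon for $M'$ so that it misses $D_C$, whence $D\cap M=D\cap M'$ is connected. When $M$ is connected --- the case needed for Proposition~\ref{subdivision} --- this last situation does not arise, since $D_C$ would be a second component.

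The step I expect to be the real obstacle is exactly that last one: an innermost circle of $F$ bounding a disc component of $M$. Such a component is not attached to the rest of $M$, so a bigon produced inductively for $M\setminus D_C$ need not meet $M$ in a connected set unless it is chosen to avoid $D_C$; making the induction robust enough to ensure this --- for instance by carrying along, throughout the argument, a finite family of ``forbidden'' subdiscs that the bigon must miss --- is the one point needing genuine care. Everything else is soft: Jordan--Schoenflies, the tree structure of arc-decompositions of a disc, and the elementary fact that removing an interior disc does not disconnect a surface.
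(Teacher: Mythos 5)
Your argument is correct in the case that matters, and it takes a genuinely different route from the paper's. The paper works from the inside out: it starts at a point of $\gamma\cap\partial M$, produces a first bigon $D_{a_1\cup b_1}$, and whenever $D\cap M$ is disconnected it runs along $a_1$ to the first boundary point of a stray component and passes to a strictly smaller bigon; the induction is on the number of components of $D\cap M$. You instead cut $\overline{A}$ along all arcs of $F=\partial M\cap\overline{A}$ at once, use the tree structure of that decomposition to extract a leaf bigon, and dispose of the closed components of $F$ by a separate innermost-circle induction on $M$. Your version makes explicit two things the paper leaves implicit: that the only possible bigons are (arc of $F$, side) pairs, and that the only genuine obstruction is a component of $\partial M$ lying entirely inside $A$. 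Each step of your proof is a standard disc-cutting fact, which makes it easier to verify than the paper's.

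The residual case you flag --- an innermost circle $C$ with $D_C$ a disc component of $M$ --- cannot be repaired by carrying forbidden subdiscs, because the lemma as stated is actually false there. Let $F$ consist of a single arc $\alpha_0$ (a diameter of $\overline{A}$), let one component of $M$ meet $\overline{A}$ in $P_1\setminus \operatorname{int}(B)$, where $P_1$ is the upper half-disc and $B$ a small round hole in its interior, and add two tiny disc components of $M$, one inside $B$ and one inside the lower half-disc $P_2$. The only candidate bigons are $P_1$ and $P_2$, and each meets $M$ in two components. So the lemma needs the hypothesis that no component of $M$ is contained in $A$; connectedness of $M$ suffices (a connected $M\subset A$ would have $\partial M\cap\gamma=\emptyset$) and holds in the only application, Proposition \ref{subdivision}, where $M=M_k$ is a sphere with holes. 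In that case your proof is complete, since, as you observe, Subcase B would force $M=D_C$. For what it is worth, the paper's own proof carries the same implicit restriction: the point $p_2$ and the arc $b_2$ joining $p_2$ to $\gamma$ exist only when the component $A_2$ reaches $a_1$, which fails exactly for island components of $M$.
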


\begin{figure} 
   \centering	
	\includegraphics[scale=1]{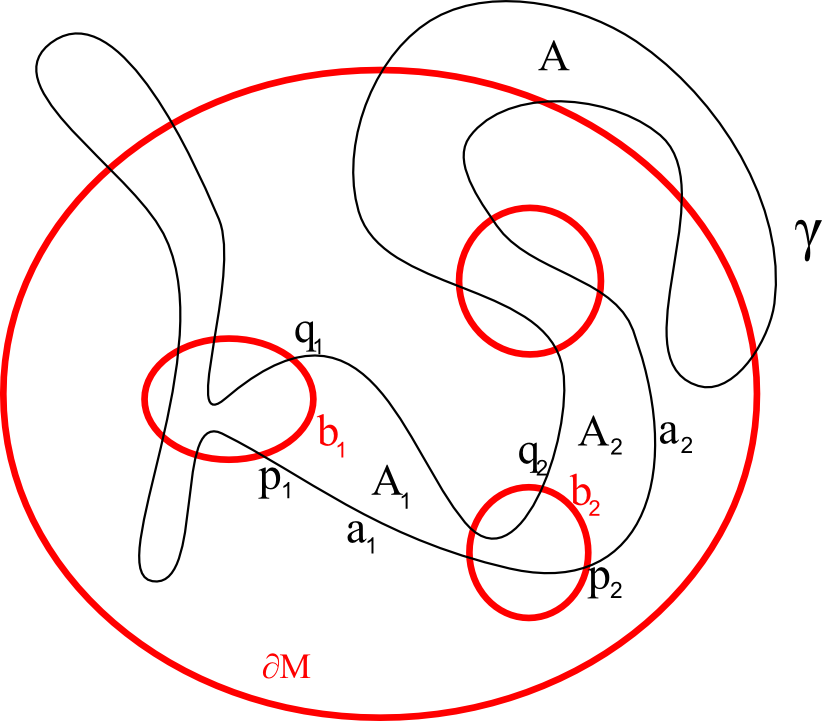}
	\caption{Proof of Lemma \ref{2-gon}} \label{2gon_fig}
\end{figure}

The proof of this 
Lemma is the main technical part of the paper.
The proof is illustrated on Figure \ref{2gon_fig}.
The reader can draw several pictures like on Figure \ref{2gon_fig}
and convince him or herself that the statement is correct.

\begin{proof}
The strategy of the proof is to find some disc $D_{a' \cup b'}$,
with possibly a large number of components of $D_{a' \cup b'} \cap M$.
We then show that there exists a subdisc of $D_{a' \cup b'}$ whose intersection
with $M$ has a smaller number of connected components.

Let $p_1 \in \partial M \cap \gamma$
and $C_1$ be the connected component of $\partial M$
that contains $p_1$.

Define an interval $b_1 \subset C_1$
to be a connected component of $C_1 \cap A$
that has $p_1$ as an enpoint
(note that it is unique). 
We denote the other endpoint of $b_1$ by 
$q_1$.
Let $v$ be a tangent vector to $\gamma$ at $p_1$
pointing inside $M$. 
Let $a_1$ be an arc of $\gamma$ that starts at
$p_1$ in the direction $v$ and ends at $q_1$.

Observe that the curve
$a_1 \cup b_1$ is a simple closed curve enclosing a disc $D_{a_1 \cup b_1} \subset A$ 
that contains a non-empty subset of $M \cap A$.
If $\overline{A}_1 = D_{a_1 \cup b_1} \cap M$
has one connected component then we are done.
Assume it has more than one.
Let $A_1$ be a component of $\overline{A}_1$ with $b_1 \subset \partial A_1$
and let $A_2$ be a different component. Define a point $p_2 \in a_1$ by
$$p_2 = a_1(\inf _{0 \leq t \leq 1} \{ t | a_1(t) \in \partial A_2 \}) $$
It follows from the definition that $p_2 \in \partial M$.
We can find a point $q_2 \in \gamma \cap \partial M$
and an arc $b_2 \subset \partial M \cap A$ from $p_2$ to $q_2$.

The interior of $\gamma \setminus a_1$ 
is contained in the interior 
of $S^2 \setminus D_{a_1 \cup b_1}$. 
It follows that $q_2 \in a_1$.
Denote the arc of $a_1$ between $p_2$ and $q_2$ by $a_2$.
We have that $a_2 \cup  b_2$ separates $S^2$ into a disc $D_{a_2 \cup b_2}$ that contains
$A_2$ and its complement that contains $A_1$.
Set $\overline{A}_2 = D_{a_2 \cup b_2} \cap A$.
It is non-empty and has fewer connected components than $\overline{A}_1$.
We iterate this procedure until we are left with just one connected component.

\end{proof}

We now prove Proposition \ref{subdivision}.

The direction $c(r) \leq c_k(r)$ is simple.
Given a Riemannian 2-sphere we can make $k$ holes in it of small area and small boundary length.
We subdivide the resulting sphere with holes and
use the fact that the connected component of $\partial A_1$
that contains $\gamma$ will have length close 
to $c_k(r)$.
Here we did not use that $r \leq \frac{1}{4}$.

To prove the other direction we proceed as follows.
Let $\epsilon$ be a small positive constant.
Given a sphere with $k$ holes $M_k$ of area $1$ we attach $k$ discs of total area $\epsilon$ to the boundary of $M_k$.
We obtain a Riemannian 2-sphere $M$. 

Let $\gamma_1$ be a simple closed curve of length $\leq c_r(M) +\epsilon$ 
subdividing $M$ into two subdiscs $A$ and $B$ of area
between $r(1+\epsilon)$ and $(1-r)(1+\epsilon)$. 
Suppose $A$ is a subdisc of area $\geq \frac{1}{2}(1+\epsilon)$.
If $\gamma_1$ is disjoint from $\partial M_k$ 
we conclude that $c(M_k,\frac{r}{1+\epsilon})\leq c_r(M)+ \epsilon$.

Suppose $\gamma_1$ intersects $\partial M_k$.
By Lemma \ref{2-gon} there exists an arc $a_1$ of $\gamma_1$ and an arc $b_1$ of $\partial M_k$,
such that $A_1=D_{a_1 \cup b_1} \cap M_k \subset A$ is connected. 

We consider two possibilities. First, suppose $|A_1| \geq r(1+\epsilon)$. 
Let $\gamma_2$ be the intersection of $a_1$ with the interior of $M_k$.
$A_1$ is a connected component of $M_k \setminus \gamma_2$ of area
between $r-\epsilon$ and $1- r +\epsilon$.  The rest of connected 
components of $M_k \setminus \gamma_2$ have area less than $1- r +\epsilon$.
So $c(M_k,r-\epsilon)\leq c_r(M) +\epsilon$.

Alternatively, suppose $|A_1| < r(1+\epsilon)$. 
In this case we can define a new curve $\gamma_2$, such that the number of connected components of
$M_k \setminus \gamma_2$ is smaller than the number of connected components of $M_k \setminus \gamma_1$.
We do this by replacing the arc $a_1$ of $\gamma_1$ by $b_1 \subset \partial M_k$.
Note that we can slightly perturb the part of the new curve that coincides with $b_1$ 
so that it is entirely in $M \setminus M_k$, in particular,
the intersection of $\gamma_2$ with $\partial M_k$ is transversal and
the length of the intersection of $\gamma_2$ with the interior of $M_k$
is smaller than that of $\gamma_1$. 
As a result we transferred the area of $A_1$ from $A$ to $B$.
Since $|A_1|< r(1+\epsilon)$, $r\leq \frac{1}{4}$ and $|A| \geq \frac{1+\epsilon}{2}$
we obtain that the area of each of the two discs $M \setminus \gamma_2$ is at least $r(1+\epsilon)$. 
In this way we can continue reducing the number of connected components of $M_k \setminus \gamma_i$ until one of the 
subdiscs of $M \setminus \gamma_i$ contains only one connected component of $M_k$ or until
we encounter the first possibility above.

By Proposition \ref{semicontinuous} we conclude that $c_k(r) \leq c(r)$.

\section{$T$-maps}

\begin{definition}
A map $f$ from $M_k$ to a trivalent tree $T$  is called a $T$-map if the topology of fibers of $f$
is controlled in the following way:
the preimage of any point in an edge of $T$ is a circle, there exist $k$ terminal vertices $x_k \in T$,
such that $f^{-1}(x_k)$ is a connected component of $\partial M_k$, the preimage of any other terminal point of
$T$ is a point, and the preimage of a trivalent vertex of $T$ is homeomorphic to the greek
letter $\theta$. 
\end{definition}

\begin{theorem} \label{tree1}
For $r \in (0,\frac{1}{4}]$ and any $\epsilon >0$ there exists a $T-$map $f$ from 
$M_p$, $p \geq 0$, so that each fiber of the map has
length less than $\frac{c(r)}{1-\sqrt{1-r}} + |\partial M_p| + \epsilon$.
\end{theorem}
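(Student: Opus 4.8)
The plan is to prove the bound by repeatedly applying the subdivision of Proposition~\ref{subdivision} and assembling the resulting cutting curves into a single $T$-map, with the length of a fiber controlled by a convergent geometric series. Fix positive numbers $\epsilon_0,\epsilon_1,\dots$ with $\sum_{i\ge0}\epsilon_i<\epsilon$, and write $a=|M_p|$ (keeping the $\sqrt a$ normalization factor explicit throughout; for $|M_p|=1$ one recovers the stated bound verbatim).

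First I would unwind the recursion. Since $r\le\tfrac14$, Proposition~\ref{subdivision} gives $c_p(r)=c(r)$, so by the definition of $c_p(r)$ and rescaling there is a curve $\gamma^{(0)}\in S_{M_p}(r)$ with $|\gamma^{(0)}|\le c(r)\sqrt a+\epsilon_0$: either a simple closed curve in the interior of $M_p$ or a union of disjoint arcs with endpoints on $\partial M_p$. It cuts $M_p$ into a distinguished region $A_1$ with $r\,a\le|A_1|\le(1-r)a$ together with regions $A_2,\dots,A_m$; by condition~(1) every $A_j$ has area $\le(1-r)a$, and by condition~(2) all arcs of $\gamma^{(0)}$ lie on a single boundary component $\gamma'$ of $A_1$. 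The boundary of any $A_j$ consists of arcs of $\gamma^{(0)}$ (of total length at most $|\gamma^{(0)}|$), arcs of $\partial M_p$, and whole boundary circles of $M_p$, so $|\partial A_j|\le|\gamma^{(0)}|+|\partial M_p|$. Each $A_j$ is again a sphere with finitely many holes, so I would repeat this on each $A_j$ and iterate. This produces a rooted, locally finite tree $\mathcal R$ of nested regions; along a root-to-leaf ray $M_p=R^{(0)}\supset R^{(1)}\supset\cdots$ one has $|R^{(i)}|\le(1-r)^ia$, the cutting curve $\gamma^{(i)}$ of $R^{(i)}$ satisfies $|\gamma^{(i)}|\le c(r)(1-r)^{i/2}\sqrt a+\epsilon_i$, and $\partial R^{(i)}\subset\partial M_p\cup\gamma^{(0)}\cup\dots\cup\gamma^{(i-1)}$.

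Next I would build the $T$-map, working bottom-up through $\mathcal R$. If the cut $\gamma$ of a region $R$ is a simple closed curve it separates $R$ into two pieces $A_1,A_2$; identifying the terminal vertex of the (already built) map $T_{A_1}$ over $\gamma$ with the terminal vertex of $T_{A_2}$ over $\gamma$ makes that point an interior edge point with circular fiber $\gamma$ and yields the $T$-map of $R$. If $\gamma$ is a union of arcs, the structural fact that all of them lie on the single circle $\gamma'\subset\partial A_1$ --- so that every other piece $A_j$ is attached to $A_1$ only along arcs of $\gamma'$ --- lets us sweep a collar of $\gamma'$ across $A_1$ while simultaneously sweeping the matching collars in the $A_j$, and splice $T_{A_2},\dots,T_{A_m}$ onto $T_{A_1}$ along $\gamma'$; trivalent vertices, with $\theta$-graph fibers, occur exactly where $\gamma'$ breaks up into the separate boundary circles of the pieces, and boundary circles of $R$ untouched by $\gamma$ descend directly to terminal vertices. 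Carried up to the root this gives $f\colon M_p\to T$. A generic fiber lies over an edge point produced while sweeping some region $R^{(n)}$ along one root-to-leaf ray, so it is a circle (or, near a trivalent vertex, a $\theta$-graph) built from arcs of $\gamma^{(n)}$ and arcs of $\partial R^{(n)}\subset\partial M_p\cup\gamma^{(0)}\cup\dots\cup\gamma^{(n-1)}$; thus it uses at most one $|\gamma^{(i)}|$-worth of arcs from each level $i$ (those arcs being pairwise disjoint) and at most $|\partial M_p|$ of boundary (those arcs being pairwise disjoint too). Therefore
\[
\mathrm{length}\bigl(f^{-1}(t)\bigr)\ \le\ \sum_{i=0}^{\infty}\bigl(c(r)(1-r)^{i/2}\sqrt a+\epsilon_i\bigr)+|\partial M_p|\ <\ \frac{c(r)}{1-\sqrt{1-r}}\,\sqrt{|M_p|}+|\partial M_p|+\epsilon ,
\]
which is the claim; for $p=0$ it specializes to Theorem~\ref{tree}.

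The step I expect to be the main obstacle is the assembly in the arc case: verifying that splicing the sub-trees along $\gamma'$ can be arranged so the resulting map is genuinely a $T$-map, i.e.\ that every fiber is a point, a circle, a whole boundary component, or a $\theta$-graph, with no worse singularity arising where several pieces $A_j$ abut $\gamma'$; and dealing with the infinite iteration, i.e.\ checking that the nested construction converges to an honest map onto a tree rather than degenerating (since each cut has length $\le c(r)\sqrt{\mathrm{area}}$, long thin pieces must be cut essentially transversally and so shrink to points, but this has to be argued --- or else one truncates after finitely many levels and disposes of the leftover pieces, whose boundary lengths are controlled, separately). Everything else --- the geometric-series estimate and the disjointness of the arcs used on any one fiber --- is then routine.
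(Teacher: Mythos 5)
Your overall strategy is the paper's: iterate Proposition~\ref{subdivision} to cut into regions of geometrically decreasing area, keep all arcs of each cut on one boundary circle of the distinguished piece, and sum the lengths as a geometric series. The length bookkeeping (each fiber lies near $\partial R^{(n)}\cup\gamma^{(n)}\subset\partial M_p\cup\gamma^{(0)}\cup\dots\cup\gamma^{(n)}$, with each arc used once) is also the paper's. But the two steps you defer are exactly where the paper's proof lives, and neither is routine. First, the assembly in the arc case: the paper's gluing device (Lemma~\ref{concatenation}) only concatenates two $T$-maps along a \emph{connected} arc, and when $\gamma$ is a union of arcs the pieces $A_j$ and the boundary collars $N_\delta(S_i)$ generally meet the distinguished piece along \emph{several} disjoint arcs of $\gamma'$. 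The paper resolves this by a specific order of attachment --- first the regions cut off by ``horseshoes'' (arcs with both endpoints on one boundary collar), each of which does meet along a single arc, and then proving (Lemma~\ref{horseshoe}, a nontrivial planarity argument) that once no horseshoes remain some collar meets the current boundary in a connected arc, so the process never gets stuck. Your phrase ``sweep a collar of $\gamma'$ across $A_1$ while simultaneously sweeping the matching collars in the $A_j$'' does not by itself produce $\theta$-graph singularities; a naive simultaneous sweep where three or more pieces abut $\gamma'$ gives fibers with higher-order vertices, which is not a $T$-map.

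Second, the termination. The infinite iteration cannot simply ``converge''; the paper instead stops after $N=\lceil\log_{4/3}(|M_p|/A)\rceil$ levels and handles the leftover pieces with a genuine base case (Lemmas~\ref{small length}, \ref{small area_S^2}, \ref{small area_Mk}): small \emph{area} alone does not give a $T$-map with short fibers --- one needs the Besicovitch-style induction on boundary length and the tree-contraction argument for tiny discs, plus the extra $4(k-1)\sqrt A$ term to connect the $k$ boundary components of a small leftover piece, which is why the error term $4(N-n+k)\sqrt A$ appears in the paper's inductive estimate~(\ref{inductive statement}) and is absorbed into $\epsilon$ only because $N\sqrt A\to 0$ as $A\to 0$. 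You correctly identify both obstacles, but identifying them is not the same as overcoming them; as written the proposal omits the content of Lemmas~\ref{concatenation}--\ref{small area_Mk} and \ref{horseshoe}, i.e.\ most of the actual proof.
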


Theorem \ref{tree} follows by taking $r=\frac{1}{4}$ and applying Theorem \ref{papasoglu}.

In the proof we will repeatedly use the following simple fact,
so it is convenient to state it as a separate lemma.

\begin{lemma} \label{concatenation}
Let $A_1$ and $A_2$ be two closed smooth submanifolds (with boundary) of $M_p$, 
such that $\alpha= A_1 \cap A_2$ is a connected arc.
Let $c_i$ denote the connected component of $\partial A_i$ that
contains $\alpha$.
Suppose $|c_1 \cup c_2|< L$ and that each $A_i$ admits a $T-$map with fibers
of length $< L$, then $A_1 \cup A_2$ admits a
$T-$map with fibers of length $\leq L$.
\end{lemma}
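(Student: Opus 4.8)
The plan is to glue the two given $T$-maps along the common arc $\alpha$ by first normalizing them so that they send a neighbourhood of $\alpha$ to a terminal edge of their respective trees, then wedging the two trees together and letting the combined map interpolate across the seam. Concretely, I would start from the $T$-maps $f_i : A_i \to T_i$ with fibers of length $< L$. The arc $\alpha$ lies on the boundary component $c_i$ of $A_i$; after an isotopy of $A_i$ (which does not change the metric quantities we care about, since we only need the length bound on fibers, not on $\alpha$ itself) I may assume $\alpha$ is a single fiber-like piece near a terminal vertex of $T_i$, or more precisely that there is a short terminal edge $e_i \subset T_i$ with terminal vertex $v_i$ such that $f_i^{-1}$ of a small neighbourhood of $v_i$ is a collar of the boundary arc of $c_i$ containing $\alpha$. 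If $\alpha$ happens to be only part of a boundary component, I would instead cut $T_i$ at an interior point of an edge over which $\alpha$ lives and introduce a new terminal vertex there; the point is that in all cases I can arrange a terminal "docking" edge on each side whose preimage is a collar neighbourhood of $\alpha$ in $A_i$.

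Next I would form the tree $T = T_1 \cup_{v_1 \sim v_2} T_2$, identifying the two docking vertices (or, to keep $T$ trivalent and avoid a valence-two vertex, simply delete $v_1$ and $v_2$ and concatenate $e_1$ with $e_2$ into one edge). Define $f : A_1 \cup A_2 \to T$ by $f|_{A_i} = f_i$ away from the collar of $\alpha$, and on the collar — which is a product $\alpha \times [0,1]$ glued from the two sides — let $f$ be the obvious projection onto the concatenated edge $e_1 * e_2$. By construction the fibers of $f$ over points of $T_i$ away from the seam are exactly the fibers of $f_i$, hence have length $< L$. The only new fibers are those over the seam edge: such a fiber is (a perturbation of) a curve running along $c_1$ from one end of $\alpha$ and back along $c_2$, i.e. it is contained in $c_1 \cup c_2$ up to the collar identification, so its length is at most $|c_1 \cup c_2| < L$. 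The topological control is also inherited: preimages of interior edge points are circles (the collar fibers $\alpha \times \{t\}$ become circles once the two collars are glued along $\partial \alpha$), preimages of trivalent vertices are $\theta$-graphs, and the terminal-vertex conditions for the remaining boundary components of $A_1 \cup A_2$ come from the $f_i$.

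The step I expect to be the main obstacle is the \emph{smooth matching along the seam}: ensuring that the two collars of $\alpha$, coming from $A_1$ and $A_2$ with a priori different parametrizations, can be identified so that the glued map is a genuine $T$-map — in particular that the fiber over the junction point genuinely becomes a single simple closed curve (or $\theta$, if $\alpha$'s endpoints sit on trivalent structure) rather than acquiring extra components or a singular point, and that the level function is Morse-type across the seam. This is handled by a careful but routine collar-reparametrization argument: shrink both collars, use the fact that $\alpha$ is a single connected arc so that its two endpoints are the only places the boundary structure changes, and absorb any mismatch into a reparametrization of the seam edge $e_1 * e_2$. The length bound is unaffected because reparametrizing the target does not change fibers as subsets of $M_p$. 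Since we are asserting the existence of a $T$-map with fibers of length $\leq L$ (not $< L$), the non-strict inequality in the conclusion exactly accommodates the one family of seam fibers whose length is only bounded by $|c_1 \cup c_2| < L$, together with any arbitrarily small lengthening introduced by the perturbation that makes the glued curve embedded.
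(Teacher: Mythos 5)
Your gluing of the trees is topologically impossible, and this is not a matter of ``careful collar reparametrization'' to be deferred --- it is the crux of the lemma. You join $T_1$ and $T_2$ by concatenating two terminal edges into a single edge, so that near the seam the fibers form one interval's worth of circles, i.e.\ an annulus interpolating between the $A_1$-side and the $A_2$-side. Now count boundary components (equivalently, Euler characteristics): $A_1\cup A_2$ has $b_1+b_2-1$ boundary circles, because $c_1$ and $c_2$ cease to be boundary components and are replaced by the \emph{single} new component obtained from $c_1\cup c_2$ by deleting the interior of $\alpha$; your tree, however, has only $(b_1-1)+(b_2-1)=b_1+b_2-2$ boundary leaves, and the Euler characteristic it encodes is $\chi(A_1)+\chi(A_2)$ rather than the correct $\chi(A_1)+\chi(A_2)-1$. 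In the simplest instance ($A_1,A_2$ discs glued along a boundary arc) your tree is a single edge with two point-leaves, which is the tree of a sphere, not of the disc $A_1\cup A_2$. Your local picture of the seam reflects the same confusion: the two collars of $\alpha$ glue to a \emph{disc} neighbourhood of $\alpha$, so the slices $\alpha\times\{t\}$ are arcs, not circles; and there is no ``interior edge point over which $\alpha$ lives'' to cut at, since $\alpha$ is a proper subarc of the single fiber $c_i$ lying over a terminal vertex of $T_i$.

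What is actually needed --- and what the paper does --- is to insert exactly one new trivalent vertex, i.e.\ one new pair of pants. Take the collar cylinders $C_i\subset A_i$ foliated by circles $f_i^{-1}(t)$ with $f_i^{-1}(0)=c_i$, and for small $t$ perform surgery on the pair of circles $f_1^{-1}(t)$, $f_2^{-1}(t)$ inside small neighbourhoods of the two endpoints of $\alpha$. This produces three families of circles: an ``outer'' family sweeping out to the new boundary component of $A_1\cup A_2$, of length close to $|c_1|+|c_2|-2|\alpha|\leq|c_1\cup c_2|<L$, and two ``inner'' families continuing to $c_1'$ and $c_2'$ and from there into the rest of the original $T$-maps. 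The singular fiber separating the three families is a $\theta$-graph close to $c_1\cup c_2$, of length $<L$ by hypothesis; it is the fiber over the new trivalent vertex, whose three edges run to (the trimmed) $T_1$, (the trimmed) $T_2$, and a new terminal vertex over the new boundary component. Your length estimates would survive this correction, but without the extra trivalent vertex the map you describe does not exist.
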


\begin{proof}
\begin{figure} 
   \centering	
	\includegraphics[scale=0.2]{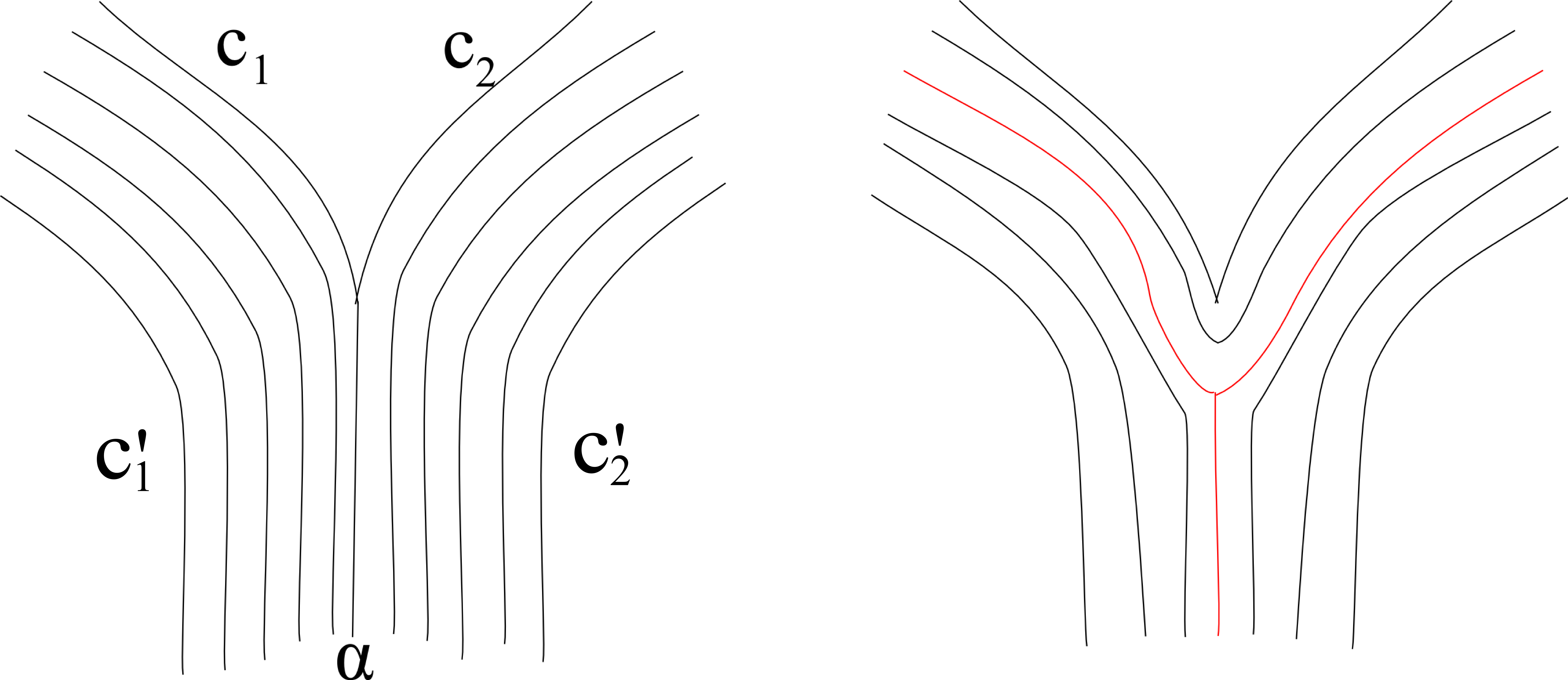}
	\caption{Surgery in the neighbourhood of $a_i$} \label{concat_fig}
\end{figure}


From the assumption that $A_i$ admits a $T-$map it follows that
there exists an embedded cylinder $C_i \subset A_i$ and a map $f_i:C_i \rightarrow [0,1]$
with fiber $f^{-1}(0)=c_i$ and the length of all fibers $< L$.
The boundary $\partial C_i = c_i \cup c_i'$ with $c_i'$ contained in the
interior of $A_i$. 

Let $a_1$ and $a_2$ be the endpoints of $\alpha$.
For a sufficiently small $\delta>0$ we perform a surgery on the the closed curves in
 the foliation $\{f_i^{-1}(t)|0 \leq t \leq \delta \}$.
 The surgery happens in the $\delta-$neighbourhood of $a_i$
 and is depicted on Figure \ref{concat_fig}.
 
 As a result of the surgery we obtain three families of curves.
 The ``outer" family converging to $c_1 \cup c_2$, and two ``inner" families each converging
 to $c_1'$ or $c_2'$.
 These three families are separated by a $\theta$ graph (drawn in red on Figure \ref{concat_fig}).
 This surgery defines the desired $T-$map.
 
\end{proof}

We will need first a version of Theorem \ref{tree1} for very small balls.

\begin{lemma} \label{small length}
For any $\epsilon > 0$ there exists $l>0$, such that for every disc $D \subset M_p$
with $|\partial D| \leq l$ there exists a diffeomorphism $f$ from $D$
to the standard closed disc $D_{st}=\{x^2+y^2 \leq 1 \}$
so that the preimage of each concentric circle $f^{-1}(\{x^2 +y^2 = const \})$, 
 has length $\leq (1+\epsilon)|\partial D|$.
\end{lemma}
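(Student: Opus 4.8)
The plan is to prove Lemma \ref{small length} by a compactness-and-rescaling argument: if the claim failed, we could extract a sequence of discs whose boundaries shrink to zero but which admit no such ``almost radial'' diffeomorphism, and derive a contradiction by blowing up.

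First I would set up the geometric picture near a point of small intrinsic diameter. Fix $\epsilon>0$. Recall from requirement (2) in the proof of Proposition \ref{semicontinuous}-type estimates (or simply from smoothness and compactness of $M_p$) that there is $\rho_0>0$ so that every metric ball $B_\rho(x)$ with $\rho\le\rho_0$ is $(1+\epsilon/4)$-bilipschitz to the flat disc of radius $\rho$. Now suppose $D\subset M_p$ is a disc with $|\partial D|\le l$, where $l$ will be chosen small. By the isoperimetric inequality on $M_p$ (valid with the Euclidean constant up to a $(1+\epsilon)$ factor once $l$ is small, since curvature is bounded), the area of $D$ is at most $(1+\epsilon)|\partial D|^2/(4\pi)$, hence $D$ is contained in a metric ball of radius comparable to $l$; more precisely the intrinsic diameter of $D$ is at most, say, $\tfrac{1}{2}|\partial D|$, so $D\subset B_{\rho}(x)$ for $x\in D$ and $\rho = \tfrac12|\partial D|\le \tfrac12 l$. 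Thus for $l\le 2\rho_0$ the disc $D$ sits inside a region that is $(1+\epsilon/4)$-bilipschitz to a flat disc $\widehat D\subset\mathbb{R}^2$, and $\partial D$ corresponds to a smooth Jordan curve $\widehat\gamma$ of length $\le(1+\epsilon/4)|\partial D|$ bounding a region $\widehat\Omega$.

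The core of the argument is then a purely Euclidean statement: \emph{given a smooth Jordan domain $\widehat\Omega\subset\mathbb{R}^2$, there is a diffeomorphism $g$ from $\widehat\Omega$ to $D_{st}$ so that each curve $g^{-1}(\{x^2+y^2=c\})$ has length at most (almost) $|\partial\widehat\Omega|$}. Here I would choose $g$ so that the level sets of $|g|^2$ are the level sets of a harmonic conjugate-type foliation, but the cleanest choice is to take the ``boundary-parallel'' foliation: reparametrize so the outer curves are obtained by flowing $\partial\widehat\Omega$ inward along, say, a smoothing of the distance function to $\partial\widehat\Omega$. Since $\widehat\Omega$ is convex after a further small modification — or more robustly, since the curves obtained by this inward flow have length non-increasing when the domain is close to a round disc — one gets all fibers of length $\le (1+\epsilon/2)|\partial\widehat\Omega|$. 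Composing with the bilipschitz map back to $D$ costs another $(1+\epsilon/4)$ factor, yielding fibers of length $\le(1+\epsilon)|\partial D|$, which is exactly the claim once the multiplicative constants are collected and $l$ chosen accordingly.

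The main obstacle is the Euclidean step: for a general small Jordan curve the naive inward parallel flow develops singularities and the intermediate level curves need not shrink monotonically, so one cannot literally use the distance function. I expect the right fix is to first invoke a quantitative normalization — after the bilipschitz identification the curve $\widehat\gamma$ is $C^1$-close (not just length-close) to a round circle, because in a ball of radius $\rho$ a curve of length $\le(1+\epsilon)\cdot 2\pi\rho'$ enclosing comparable area must be nearly round by the stability of the isoperimetric inequality (Bonnesen-type inequalities). Once $\widehat\gamma$ is a small $C^1$-perturbation of a circle, the radial foliation of the enclosed domain (push forward of concentric circles under the obvious diffeomorphism straightening $\widehat\gamma$) has all leaves of length within $(1+\epsilon/2)$ of $|\widehat\gamma|$, and the construction goes through. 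So the proof reduces to: (i) small boundary length $\Rightarrow$ small diameter and near-roundness, via isoperimetric stability; (ii) for near-round Jordan domains, concentric-circle foliations have almost-minimal leaf lengths; (iii) transport everything back through the $(1+\epsilon)$-bilipschitz chart. Steps (i) and (iii) are standard; step (ii) is elementary once (i) is in hand.
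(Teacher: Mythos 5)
There is a genuine gap, and it sits exactly at the point you flagged as the main obstacle. Your proposed fix is to deduce from $|\partial D|\le l$ that, after the bilipschitz identification, the boundary curve is $C^1$-close to a round circle ``by the stability of the isoperimetric inequality (Bonnesen-type inequalities).'' But Bonnesen-type stability requires the isoperimetric \emph{deficit} to be small, i.e.\ the enclosed area must be close to $|\partial\widehat\Omega|^2/4\pi$. The hypothesis of the lemma gives only an upper bound on $|\partial D|$ and no lower bound on $|D|$: a thin rectangle, a spiral, or a comb-shaped domain can have boundary length $\le l$ and area arbitrarily smaller than $l^2/4\pi$, and such domains are nowhere near round. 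For these domains the inward parallel flow genuinely degenerates — the inner level sets of the distance function disconnect — so neither step (i) nor step (ii) of your reduction holds, and the Euclidean core of your argument does not go through for exactly the discs the lemma must handle.

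The paper's proof is designed around this difficulty. It also passes to a $(1+\epsilon)$-bilipschitz planar chart (your step (iii) matches this), but then, instead of a parallel or radial foliation, it takes the height function $p$ on the planar image $D'$, builds the tree $T$ whose points over a regular value $a$ are the midpoints of the components of $D'\cap p^{-1}(a)$, and contracts $\partial D$ along the edges of $T$. Because each horizontal slice of $D'$ has total length controlled by the boundary (the slices are chords subtended by $\partial D'$), this yields a null-homotopy of $\partial D$ through curves of length $\le(1+\epsilon)|\partial D|$ \emph{for arbitrary, possibly highly non-convex} $D'$. The final step invokes the Chambers--Rotman theorem \cite{CR} to convert such a null-homotopy into a diffeomorphism to $D_{st}$ with concentric preimages of length $<L$; this conversion is itself nontrivial and is not addressed in your proposal. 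To repair your argument you would need either to restrict to the (false) near-round case or to replace steps (i)--(ii) with a construction valid for general Jordan domains of small length, which is essentially what the tree construction accomplishes.
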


\begin{proof}
For $\delta>0$ sufficiently small 
every ball $B \subset M_p$ of radius $\delta$
is $(1+\epsilon)-$bilipschitz diffeomorphic to a disc
in the closed upper half-plane $\mathbb{R}^2_+$. 

Let $D'$ denote the image of 
$D$ under such a diffeomorphism.
(Here we are assuming that $l<\delta$ and $D$ is contained within 
a ball of radius $\delta$).
After a small perturbation we may assume that 
the projection $p$ of $\partial D'$ onto $y$ coordinate is a Morse 
function. 

Define a 1-dimensional simplicial complex $T \subset D'$ as follows. 
Let $a$ be a regular value of the projection function $p|_{\partial D'}$
restricted to the boundary of $D'$. $p^{-1}(a) \cap D'$ is a finite union of disjoint 
closed intervals $\{v_i\}$. We set $T \cap p^{-1}(a)$ to be the midpoints 
of $v_i'$s. If $a$ and $b$ are two consecutive critical values of $p$,
it follows that $T \cap p^{-1}(a,b)$ is a collection of disjoint simple arcs as on 
Figure \ref{fig_small length}.

At a critical point $\partial D'$ locally looks like the graph of 
a function $f(x)=\pm x^2$.
We connect the endpoints of the intervals of $T$
by a horizontal arc tangent to the critical point and 
contained in $D'$. 

Note that $D'$ retracts onto $T$, so in particular $T$ must be connected and simply connected,
hence a tree. We contract $D$ along the edges of $T$ in the obvious way.
As a result we obtain a contraction of $\partial D$ inside $D$ to a point through curves 
of length $\leq (1+\epsilon)|\partial D|$. After a small perturbation we
can assume that this homotopy realizes the desired diffeomorphism.
For details we refer the reader to \cite{CR}, where it is shown that
if there exists a homotopy of the boundary of $D$ to a point through 
curves of length $<L$, then there exists a diffeomorphism from 
$D$ to $D_{st}$ so that preimages of concentric circles have length $<L$.
\begin{figure} 
   \centering	
	\includegraphics[scale=0.3]{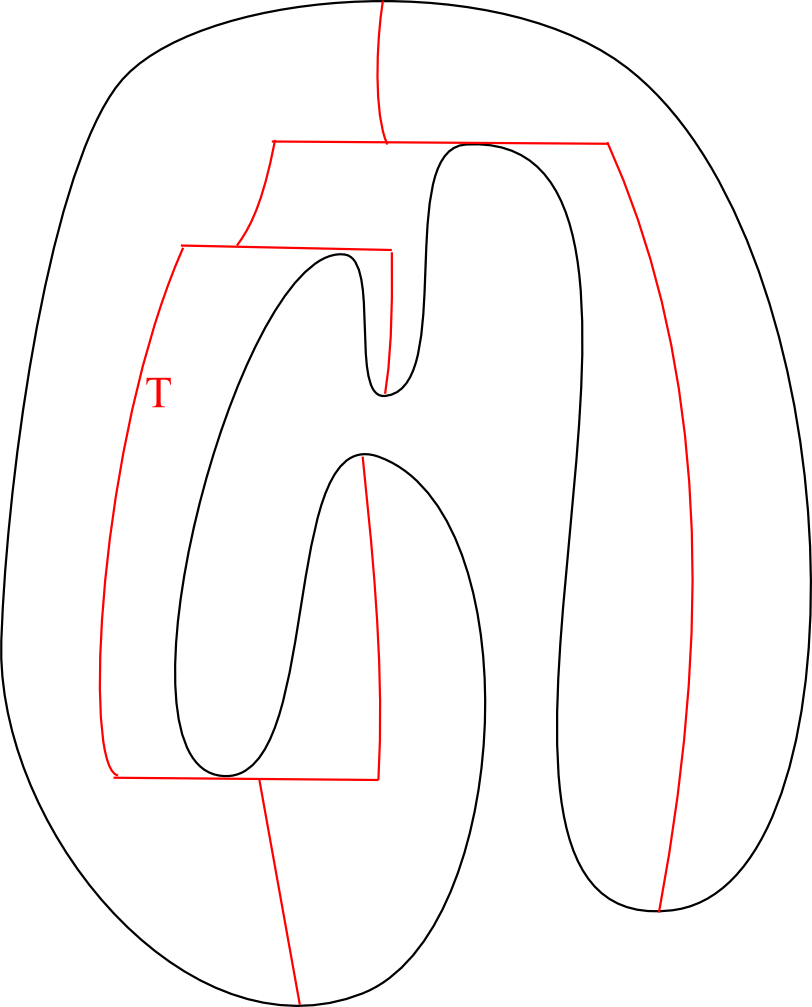}
	\caption{Tree $T$ in $D'$} \label{fig_small length}
\end{figure}

\end{proof}  

\begin{lemma} \label{small area_S^2}
For any $\epsilon > 0$ there exists $A>0$, such that for every disc $D \subset M_p$
with $|D| \leq A$ there exists a $T-$map $f$ from $D$ with fibers of
length less than $|\partial D| + \epsilon$. 
\end{lemma}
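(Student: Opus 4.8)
For any $\epsilon>0$ produce $A>0$ so that every disc $D\subset M_p$ with $|D|\le A$ admits a $T$-map with fibers of length $<|\partial D|+\epsilon$.

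The plan is to reduce the small-area statement to the small-length statement of Lemma~\ref{small length} by covering $D$ by balls on which the metric is almost Euclidean and then running the subdivision machinery of Section~2 inside $D$ only finitely many times. First I would fix $\epsilon>0$ and, applying Lemma~\ref{small length} with parameter $\epsilon/2$, obtain $l>0$ so that every subdisc of $M_p$ with boundary length $\le l$ admits a diffeomorphism to $D_{st}$ whose concentric-circle preimages have length $\le (1+\epsilon/2)\cdot(\text{its boundary length})$; in particular such a subdisc admits a $T$-map (indeed a foliation by circles shrinking to a point, which is the degenerate trivalent tree = interval) with fibers short relative to its own boundary. Next I would choose $A$ small enough that: (i) any disc of area $\le A$ has diameter so small that it lies in a ball $B_\delta(x)$ on which the metric is $(1+\epsilon_0)$-bilipschitz Euclidean for a tiny $\epsilon_0$; and (ii) $A$ is small compared with $l^2$ and $\epsilon^2$, the precise thresholds to be extracted from the isoperimetric/Besicovitch estimates below.

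Now the main step. Given $D$ with $|D|\le A$, if $|\partial D|\le l$ we are done by Lemma~\ref{small length}. Otherwise I would repeatedly apply Papasoglu's theorem (Theorem~\ref{papasoglu}), in the version allowing arcs that was developed in Section~2: cut $D$ by a curve $\gamma$ with $|\gamma|\le 2\sqrt{3}\,\sqrt{|D|}$ into pieces each of area $\le \frac34|D|$, organized so that there is a distinguished region $A_1$ with $\frac14|D|\le|A_1|\le\frac34|D|$ and $\gamma$ lies in a single boundary component of $\partial A_1$ — this is exactly the content of Proposition~\ref{subdivision}/\ref{semicontinuous} applied to the surface-with-holes $D$ (or its doubling trick). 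Iterating, after $N$ steps every remaining piece has area $\le (\tfrac34)^N|D|$; since $|D|\le A$ is tiny, after a bounded number $N$ of steps (depending only on $A$ and $l$, via the isoperimetric inequality $|\partial(\text{piece})|\le C\sqrt{\text{its area}}$ valid on almost-Euclidean balls) every piece has boundary length $<l$, hence admits a $T$-map. The total length of all the cutting curves is controlled by the geometric series $\sum_{i\ge0} 2\sqrt3\,(\tfrac34)^{i/2}\sqrt{|D|}\le 26\sqrt{|D|}\le 26\sqrt{A}$, which I arrange to be $<\epsilon$ by choosing $A<(\epsilon/26)^2$. Similarly each $T$-map on a small piece $P$ has fibers $\le(1+\epsilon/2)|\partial P|$ and $|\partial P|$ is bounded by a fixed multiple of $\sqrt A$ plus a piece of $\partial D$; summing the $\partial D$-contributions telescopes to at most $|\partial D|$ since the cutting curves are built so that each stays inside one boundary component of the distinguished region.

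The final assembly uses Lemma~\ref{concatenation}: the pieces of $D\setminus(\text{all cuts})$ are glued back two at a time along connected arcs, and because at each stage the cut lies in a single boundary component of $A_1$ the hypotheses of Lemma~\ref{concatenation} are met, so the $T$-maps on the pieces concatenate into a single $T$-map on $D$ with fiber length bounded by the worst local bound, namely $|\partial D|+(\text{sum of cut lengths})+(\text{bilipschitz and }\epsilon/2\text{ losses})<|\partial D|+\epsilon$. The main obstacle I anticipate is bookkeeping the boundary-length accounting through the iteration: one must verify that when a piece inherits part of its boundary from $\partial D$ and part from newly introduced cuts, the $\partial D$-portions across all pieces sum to at most $|\partial D|$ (no boundary arc of $D$ is double-counted), which is where the structural condition "$\gamma$ contained in one component of $\partial A_1$" from Proposition~\ref{subdivision} does the real work; the rest is the routine geometric-series estimate already spelled out in the introduction.
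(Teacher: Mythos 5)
Your plan has a genuine gap at its core: the iteration you propose controls the \emph{area} of the pieces but never controls their \emph{boundary length}, and it is boundary length, not area, that must be driven below the threshold $l$ of Lemma \ref{small length} before that lemma can serve as a base case. After any number of Papasoglu-type cuts, a piece of arbitrarily small area can still carry an arbitrarily long arc inherited from $\partial D$ (nothing in the hypotheses bounds $|\partial D|$ in terms of $\sqrt{|D|}$ --- the conclusion $|\partial D|+\epsilon$ is precisely designed to allow $|\partial D|\gg\sqrt{|D|}$). Your justification for termination, ``the isoperimetric inequality $|\partial(\text{piece})|\le C\sqrt{\text{its area}}$ valid on almost-Euclidean balls,'' is the isoperimetric inequality run backwards: small area does not imply small perimeter, so the claim that after boundedly many steps every piece has boundary length $<l$ is false, and the process need never reach the regime where Lemma \ref{small length} applies. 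A secondary concern is that invoking Proposition \ref{subdivision} on $D$ produces cuts made of arcs with endpoints on $\partial D$, whose reassembly requires the full horseshoe machinery of Theorem \ref{tree1}, which in turn rests on Lemmas \ref{small area_S^2} and \ref{small area_Mk}; so the argument risks circularity as well.

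The paper's proof avoids both problems by inducting on $n=\lceil |\partial D|/\sqrt{A}\rceil$ rather than on area. One subdivides $\partial D$ into four arcs of equal length and uses the Besicovitch lemma to find a single arc $\alpha$ of length $\le\sqrt{A}$ (short because $|D|\le A$) joining two opposite arcs. The two resulting subdiscs each have boundary length at most $\tfrac34|\partial D|+\sqrt{A}\le (n-1)\sqrt{A}$, so the induction parameter strictly decreases, and for $n\le 8$ the boundary is shorter than $l$ and Lemma \ref{small length} applies. The two $T$-maps are then glued by a single application of Lemma \ref{concatenation}. If you want to salvage your approach, you would need to add a mechanism that chops $\partial D$ itself into short arcs --- which is exactly what the quarter-and-Besicovitch step accomplishes.
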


\begin{proof}
The proof is similar to that of Lemma 2.2 in \cite{LNR}.

Choose $A < \min \{ \epsilon, \frac{l^2}{64} \}$, where $l$ is as in Lemma \ref{small length}.
The proof is by induction on $n=\lceil \frac{|\partial D|}{\sqrt{|A|}} \rceil$.
For $n\leq 8$ the result follows by Lemma \ref{small length}.
Assume the Lemma to be true for all subdiscs with $\lceil \frac{|\partial D|}{l} \rceil<n$
and consider the case when this quantity equals $n \geq 8$.

Subdivide $\partial D$ into 4 arcs of equal length.
By Besicovitch Lemma we can find an arc $\alpha$ of length $\leq \sqrt{|A|}$ connecting two opposite 
arcs. $\alpha$ subdivides $D$ into two subdiscs $D_1$ and $D_2$ of area $\leq A$ and
boundary length $\leq \frac{3}{4}|\partial D|+ \sqrt{|A|}\leq (n-1) \sqrt{|A|}$.
Hence, by inductive assumption $D_1$ and $D_2$ admit $T-$maps with fibers of length
$\leq |\partial D|$.

By inductive assumption each disc admits a $T-$map with fibers of length $\leq |\partial D|+\epsilon$.
We also have $|\partial D_1 \cup \partial D_2| \leq |\partial D|+ \epsilon$ so the result follows by Lemma \ref{concatenation}.

\end{proof}

We need to generalize this result about small discs to other small submanifolds of $M_p$.

\begin{lemma} \label{small area_Mk}
For any $\epsilon > 0$ there exists $A>0$, such that for every submanifold with boundary $M_k \subset M_p$,
with $|M_k| \leq A$ there exists a $T-$map $f$ from $M_k$ with fibers of length
$\leq |\partial M_k|+4(k-1) \sqrt{A}+\epsilon$.
\end{lemma}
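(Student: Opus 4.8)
The plan is to induct on the number of holes $k$, using Lemma \ref{small area_S^2} as the base case $k=1$ (a disc) and Lemma \ref{concatenation} to glue the pieces together. First I would choose the area threshold $A$ to be small enough that all the finitely many invocations of Lemma \ref{small area_S^2} below are legitimate; since each gluing in Lemma \ref{concatenation} costs nothing in the fiber bound beyond the boundary length already present, the only accumulating error will come from the cutting arcs. The base case $k=1$ is exactly Lemma \ref{small area_S^2}: a disc $D\subset M_p$ with $|D|\le A$ admits a $T$-map with fibers of length $\le |\partial D|+\epsilon = |\partial M_1| + 4(1-1)\sqrt A + \epsilon$.

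For the inductive step, suppose $M_k\subset M_p$ has $k\ge 2$ boundary components and $|M_k|\le A$. I would pick two distinct boundary components $C_1, C_2$ of $\partial M_k$ and apply the Besicovitch-type argument (as in Lemma \ref{small area_S^2}) — or simply a shortest arc — to find an embedded arc $\alpha$ with one endpoint on $C_1$ and the other on $C_2$, of length $\le \sqrt{A}$, whose interior lies in the interior of $M_k$. Cutting $M_k$ along $\alpha$ merges $C_1$ and $C_2$ into a single boundary component, so the result is a connected surface $M_{k-1}'$ with $k-1$ holes, area $\le A$, and boundary length $|\partial M_{k-1}'| \le |\partial M_k| + 2\sqrt A$. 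By the inductive hypothesis $M_{k-1}'$ admits a $T$-map with fibers of length $\le |\partial M_{k-1}'| + 4(k-2)\sqrt A + \epsilon/2 \le |\partial M_k| + 2\sqrt A + 4(k-2)\sqrt A + \epsilon/2 = |\partial M_k| + 4(k-1)\sqrt A - 2\sqrt A + \epsilon/2$.

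Now I would like to reconstitute $M_k$ from $M_{k-1}'$. The subtlety is that cutting along $\alpha$ and then re-gluing is not literally the "union of two submanifolds" situation of Lemma \ref{concatenation}; rather, $M_k$ is obtained from $M_{k-1}'$ by an identification along a thin strip neighbourhood of $\alpha$. I would handle this by taking a thin bigon (or rectangle) neighbourhood $R$ of $\alpha$ inside $M_k$, of boundary length $\le 3\sqrt A$ say, which is itself a disc and hence carries a trivial $T$-map with short fibers (Lemma \ref{small length} / \ref{small area_S^2}); then $M_k = M_{k-1}'' \cup R$ where $M_{k-1}''$ is $M_{k-1}'$ with the strip removed, and the two pieces meet along a connected arc, so Lemma \ref{concatenation} applies with $L = |\partial M_k| + 4(k-1)\sqrt A + \epsilon$, provided the combined relevant boundary components have length below $L$. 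One checks that the boundary components of $M_{k-1}''$ and $R$ that contain the gluing arc together have length at most $|\partial M_{k-1}'| + |\partial R| \le |\partial M_k| + 2\sqrt A + 3\sqrt A$, and choosing $A$ small (so that $5\sqrt A < 2\sqrt A + \epsilon/2$ fails to be the binding constraint — here one instead simply absorbs all the $O(\sqrt A)$ slack by noting the target allows $4(k-1)\sqrt A$, and the pieces use at most $4(k-2)\sqrt A + O(\sqrt A)$) the hypothesis of Lemma \ref{concatenation} is met. The main obstacle, and the step I would spend the most care on, is precisely this bookkeeping: verifying that after cutting along $\alpha$, applying induction, and re-gluing via Lemma \ref{concatenation}, the fiber-length bound telescopes to exactly $|\partial M_k| + 4(k-1)\sqrt A + \epsilon$ — in particular that the arc $\alpha$ of length $\le\sqrt A$ is counted with the right multiplicity (it bounds the strip $R$ on two sides, contributing $\le 2\sqrt A$ to the boundary of $R$, which is why the coefficient jumps by $4$ rather than $2$ per hole), and that the two halves of the $\epsilon$ error (one from induction, one from the final gluing) sum correctly.
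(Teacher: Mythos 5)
Your overall strategy (reduce the number of boundary components by cutting along a short arc, then invoke Lemma \ref{concatenation} to reassemble) is in the right spirit, and your base case and the ``small area forces nearby boundary components'' step are essentially what the paper uses (though for an \emph{arbitrary} pair $C_1,C_2$ there need not be an arc of length $\le\sqrt{A}$ between them; you must take $C_2$ to be the component nearest to $C_1$, and even then the area comparison only gives length $\le \tfrac{2}{\sqrt{3}}\sqrt{A}$ --- harmless for the bookkeeping, since doubling still stays under $4\sqrt{A}$ per hole). The genuine gap is in the re-gluing step. If $R$ is a rectangle neighbourhood of $\alpha$ in $M_k$, then two opposite sides of $\partial R$ lie on $\partial M_k$ (in $C_1$ and $C_2$) and the other two sides are the two parallel push-offs of $\alpha$; consequently $M_{k-1}''\cap R$ consists of \emph{two disjoint arcs}, not a connected arc. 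Lemma \ref{concatenation} explicitly requires $A_1\cap A_2$ to be a single connected arc --- its proof performs surgery at the two endpoints of that arc to produce one $\theta$-graph and three families of curves --- and it simply does not apply to a gluing along two disjoint arcs. (Nor can you glue along one copy of $\alpha$ first and then the other: the second step is a self-identification of a single region along an arc, which is outside the scope of Lemma \ref{concatenation} altogether.) This connectedness issue is not a technicality one can wave away; it is precisely the difficulty that drives the paper's machinery (Lemma \ref{2-gon}, Lemma \ref{horseshoe}, and the requirement that subdividing arcs lie in a single boundary component of $A_1$).

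The paper's proof avoids the problem by a different decomposition: it attaches all $k-1$ connecting arcs at once, so that the complement of $\partial M_k\cup\bigcup\gamma_i$ is a disc $D$ with $|\partial D|\le|\partial M_k|+4(k-1)\sqrt{A}$ (each arc counted twice), applies Lemma \ref{small area_S^2} to the complement of a collar $N_\delta$ of $\partial D$, and then re-attaches the boundary components one at a time: a short \emph{closed} curve in $N_\delta$ separates off a piece $B$ of the collar containing exactly one component of $\partial M_k$, and that piece meets the already-treated region along a \emph{connected} arc, so Lemma \ref{concatenation} does apply at each stage. If you want to keep a one-hole-at-a-time induction, you would need to reorganize your decomposition along these lines (e.g., take $A_2$ to be a collar of $C_1\cup\alpha\cup C_2$ chopped by a separating closed curve) so that every application of Lemma \ref{concatenation} is along a single arc.
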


\begin{proof}
Let $M_k$ be a closed submanifold (with boundary) of $M_p$ and let $c$
be a connected component of $\partial M_k$. If $dist(c,\partial M_k \setminus c)=d$
then there exists an open ball $B(d/2) \subset M_k $ of radius $d/2$
whose interior does not intersect the boundary of $M_k$ (and, in particular, it
does not intersect the boundary of $M_p$). As $d \rightarrow 0$
the area of $B(d/2)$ approaches the area of a Euclidean disc
of the same diameter. Since $M_p$ is compact, 
this happens uniformly for all balls of radius $d/2$ disjoint from the boundary.
 For $d$ sufficiently small we may conclude
that $|M_k|\geq 3 (d/2)^2$.
Hence, for a sufficiently small $A$, if $|M_k|\leq A$ then the distance 
$dist(c,\partial M_k \setminus c)\leq \frac{2 \sqrt{3}}{3} \sqrt{A}$.
We attach $k-1$ arcs $\{ \gamma_i \}$ to the boundary of $\partial M_k$
of total length less than $2(k-1)\sqrt{A}$
and so that $\partial M_k \cup \bigcup \gamma_i$ is connected and its complement
in $M_k$ is homeomorphic to a disc. Denote this disc by $D$.

Consider the normal $\delta-$neighbourhood $N_{\delta}$ of $\partial D$
in $M_k$ (see Figure \ref{fig_small area}) for some small $\delta$. By Lemma \ref{small area_S^2}
the complement of $N_{\delta}$ in $M_k$ admits a $T$ map with fibers of length 
$\leq |\partial M_k|+4(k-1) \sqrt{A}+\epsilon$.
Let $\alpha$ be a short closed curve in $N_{\delta}$ which separates one
connected component of $\partial M_k$ from other connected components as
on Figure \ref{fig_small area}. Curve $\alpha$ separates $N_{\delta}$ into two regions.
Let $B$ denote the region that contains only one connected component of $\partial M_k$.
By Lemma \ref{concatenation} we can extend the $T-$map to $D \cup B$.
By chopping off connected components of $\partial M_k$ and applying Lemma \ref{concatenation} 
repeatedly we obtain the desired $T-$map.

\begin{figure} 
   \centering	
	\includegraphics[scale=0.45]{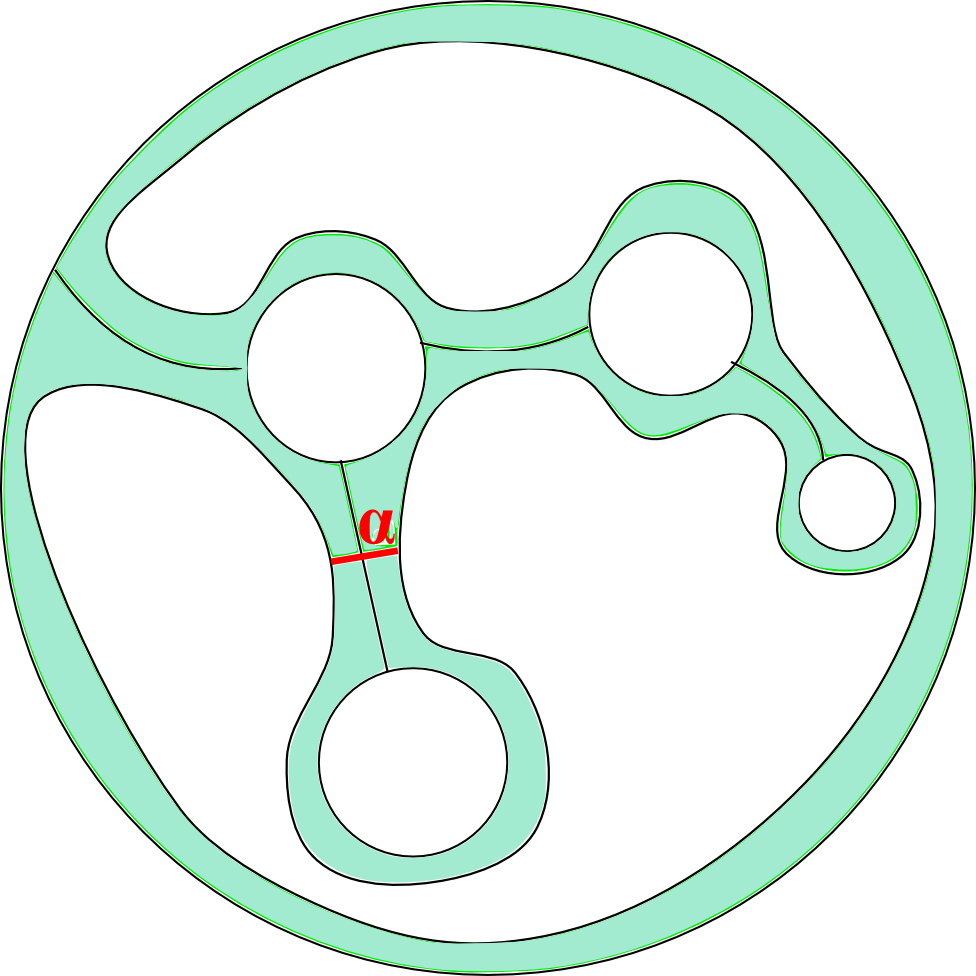}
	\caption{Inductive step of the proof of Lemma \ref{small area_Mk}} \label{fig_small area}
\end{figure}

\end{proof}

The proof of Theorem \ref{tree1} proceeds inductively by cutting $M$ into smaller pieces 
until their size is small enough so that Lemma \ref{small area_Mk} can be applied.
We assemble $T-$maps on these smaller regions to obtain one map from $M$
with the desired bound on lengths of fibers.

Fix $\epsilon>0$ and let $A$ be as in Lemma \ref{small area_Mk}.
Let $N= \lceil \log_{4/3}(\frac{|M|}{A}) \rceil$.
We claim that for every $M_k \subset M_p$ with $(\frac{3}{4})^{n+1}|M_p|<|M_k|\leq (\frac{3}{4})^{n}|M_p|$
there exists a $T-$map with fibers of length 

\begin{equation}\label{inductive statement}
\leq |\partial M_k| + 4 (N-n+k)\sqrt{A}+ \sum_{i=n}^{N-1} c(r) (\frac{3}{4})^{i/2} \sqrt{|M_k|}+\epsilon
\end{equation}

When $n=N$ the inequality (\ref{inductive statement}) is true by Lemma \ref{small area_Mk}.
We assume it to be true for $n+1 \leq N$ and prove it for $n$.

By Proposition \ref{subdivision} there exists
$\gamma \in S(M_k,r)$ of length $\leq c(r) + \epsilon'$.

We have two possibilities.

\textbf{Case 1.} $\gamma$ is a simple closed curve.

In this case $\gamma$ separates $M_k$ into two regions $N_1$ and $N_2$ of area
 $|N_i|\leq (\frac{3}{4})^{n+1}|M|$.
 The number of connected components of $\partial N_i$
is at most $k+1$.
 
By inductive assumption $N_1$ and $N_2$ admit $T-$maps into trees
$T_1$ and $T_2$ respectively. We construct $T$ by identifying 
the terminal vertex $f_1(\gamma)$ of $T_1$ with
the terminal vertex $f_2(\gamma)$ of $T_2$. $T-$map $f$
is defined by setting it equal to $f_i$ when restricted to $N_i$.
A simple calculation shows that lengths of fibers of $f$ satisfy 
the desired bound.

\textbf{Case 2.} $\gamma$ is a collection of arcs with endpoints on $\partial M_k$.

Let $\{S_1,...,S_j \}$ be connected components of $\partial M_k$
that intersect arcs of $\gamma$. For a sufficiently 
small $\delta$ the normal neighbourhood $N_{\delta}(S_i)$
is foliated by closed curves of length very close to $|S_i|$,
which are transverse to the arcs of $\gamma$.
In particular, each $N_{\delta}(S_i)$ admits a $T-$map with 
fibers of length $\leq |S_i|+\epsilon'$.
Let $A_1$ be the connected component of $M \setminus \gamma$
with area satisfying $r |M| \leq |A_1|\leq (1-r) |M|  $ (recall that it exists by definition of $c_k(r)$).
Let $B$ denote $A_1 \setminus \bigcup N_{\delta}(S_i)$
and $M_k'$ denote $M_k \setminus N_{\delta}(S_i)$. (See Figure \ref{fig_horseshoe}).

\begin{figure} 
   \centering	
	\includegraphics[scale=0.25]{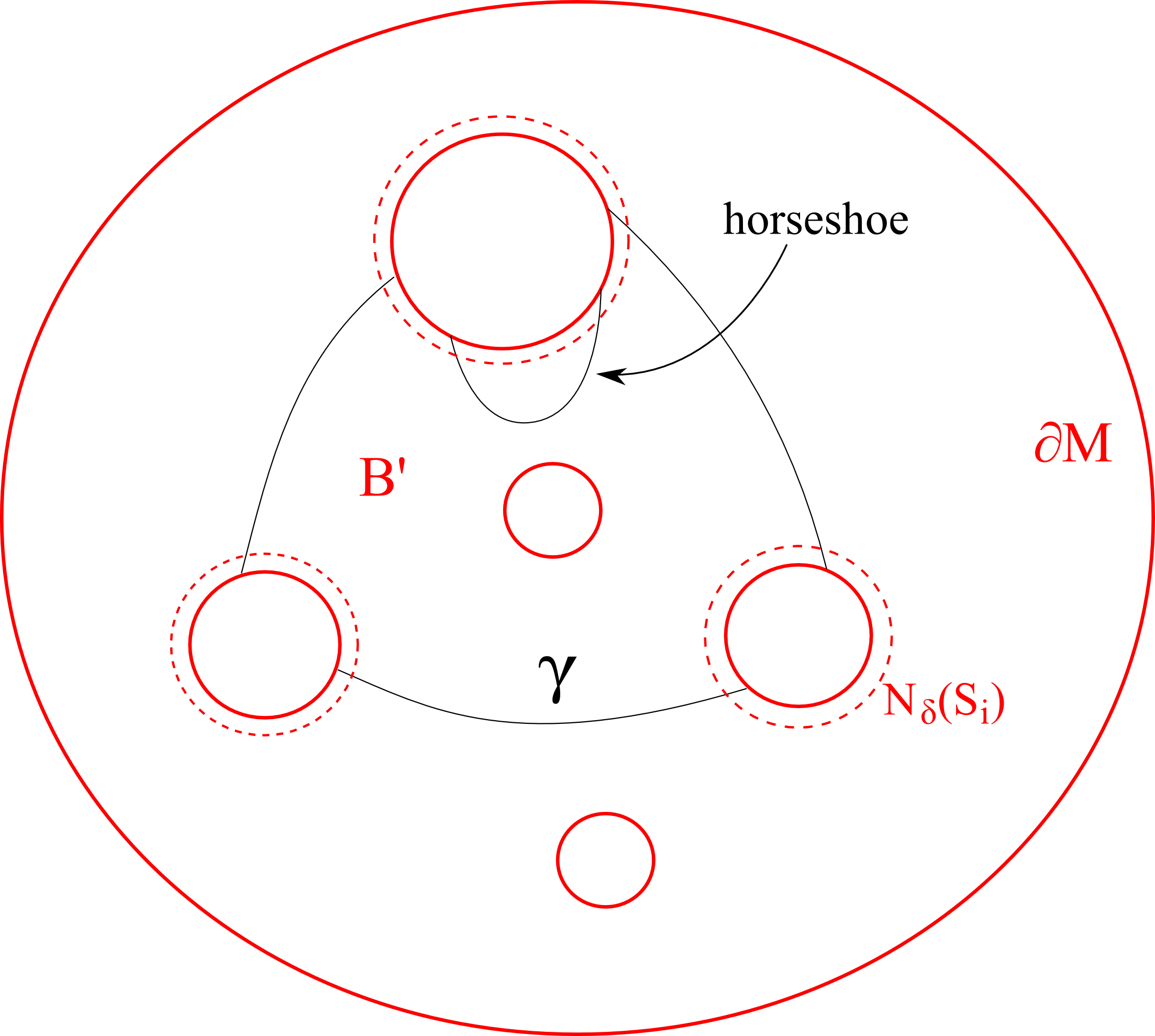}
	\caption{Case 2 in the proof of Theorem \ref{tree1}} \label{fig_horseshoe}
\end{figure}

Let $C$ be the connected component of $\partial B$ that contains
$\gamma \cap M_k'$.
We say that an arc $\alpha$ of $\gamma$ is a horseshoe
if $\alpha$ is in $M_k'$ and the endpoints of $\alpha$ lie on the same connected component 
of $\partial M_k'$. We will use the following simple observation.

\begin{lemma} \label{horseshoe}
If $\gamma$ contains no horseshoes, then there exists a
connected component $S$ of $\partial M_k'$, such that 
$S \cap C$ is connected.
\end{lemma}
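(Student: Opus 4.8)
The plan is to set up a combinatorial picture of how the arcs of $\gamma$ attach to $\partial M_k'$ and then argue by contradiction. First I would fix notation: let $S_1,\dots,S_m$ be the connected components of $\partial M_k'$ that are met by arcs of $\gamma$ lying in $M_k'$, and recall that $C$ is the single connected component of $\partial B$ containing $\gamma \cap M_k'$. Since $\gamma$ has no horseshoes by hypothesis, every arc of $\gamma$ in $M_k'$ has its two endpoints on two \emph{distinct} components among the $S_i$. I would build a graph $G$ whose vertices are the $S_i$ and whose edges are the arcs of $\gamma$ in $M_k'$; the no-horseshoe condition says $G$ is loopless (no edge from a vertex to itself), though it may have multiple edges.

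The key observation to make is about how $C$ traverses the $S_i$: going around the closed curve $C$, one alternately follows a sub-arc of some $S_i$ and then an arc of $\gamma$, crossing over to another $S_j$. Each such transition corresponds to traversing an edge of $G$, so the cyclic sequence of boundary components visited by $C$ traces a closed walk in $G$ that uses every edge (each arc of $\gamma$ in $M_k'$ appears, possibly from both sides). The statement "$S \cap C$ is connected" for some $S$ is then equivalent to saying that, in this closed walk, some vertex $S$ is visited as a single contiguous block. So I would reduce the lemma to the following purely graph-theoretic claim: if $G$ is a connected loopless multigraph (connectedness coming from the fact that $A_1$, hence $B$, is connected and $C$ is a single boundary component linking all the $S_i$ touched by $\gamma \cap M_k'$), then the natural closed walk along $C$ visits at least one vertex in a single block.

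To prove that graph claim I would induct on the number of edges, or argue directly: consider the "visit blocks" of $C$ — maximal runs of consecutive sub-arcs on the same $S_i$ — and form the quotient cyclic word. If no vertex appeared as a single block, every vertex would appear at least twice, and one can then find two interleaved vertices $S_i, S_j$ whose blocks alternate $\dots S_i \dots S_j \dots S_i \dots S_j \dots$ around $C$. Such an interleaving forces the arcs of $\gamma$ realizing the transitions to cross, contradicting that the arcs of $\gamma$ are pairwise disjoint and embedded in the surface (this is where the planarity/surface topology enters, via a Jordan-curve style argument inside $A_1$, or equivalently via the fact that $C$ bounds $B$ on one side). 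Alternatively, and perhaps more cleanly, pick a component $S$ realized by an \emph{outermost} arc of $\gamma$ — an arc $a$ cutting off a disc region $D_a \subset B$ meeting $\partial M_k'$ only in part of a single $S$ and in no other arc of $\gamma$ (such an outermost arc exists by a standard minimality argument among the arcs, using that $B$ is planar and $\gamma \cap M_k'$ is a finite disjoint union of arcs) — and check that this $S$ meets $C$ in a connected arc.

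I expect the main obstacle to be making the "interleaving forces crossing" step rigorous without drowning in case analysis: one must carefully use that $C = \partial B$ bounds the planar region $B$, that the arcs of $\gamma \cap M_k'$ are disjoint and properly embedded, and that horseshoes are excluded, to conclude that the cyclic pattern of boundary-component visits along $C$ cannot have two fully interleaved labels. The cleanest route is probably the outermost-arc argument, which localizes the topology: an innermost/outermost arc $a$ of $\gamma$ in $B$ cuts off a sub-disc whose boundary consists of $a$ together with a single sub-arc of exactly one $S$, and then that $S$ together with the way $C$ runs along it gives the desired connected intersection $S \cap C$ directly, sidestepping the global combinatorics.
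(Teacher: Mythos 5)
Your first route (record the visits of $C$ to the components of $\partial M_k'$ as a cyclic word, show that if every letter occurs in at least two blocks then two letters interleave, and rule out interleaving by a Jordan-curve argument) is essentially the paper's proof. The paper makes the non-interleaving step explicit: given a component $S$ with $S\cap C$ disconnected, it takes a gap $S'\subset S\setminus C$ with endpoints $a,b$, splits $C$ at $a,b$ into $C_1\cup C_2$, and shows a second component $S_1$ cannot meet both $C_1$ and $C_2$, because otherwise the closed curve $\beta=C_1'\cup C_2'\cup S_1'$ (two subarcs of $C$ running from $a$ to their first hits of $S_1$, joined by an arc of $S_1$) would separate $M_k'$ with $b$ and part of $C$ on opposite sides, contradicting connectedness of $B$; it then descends by passing to an innermost subarc of $C$. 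Two remarks on your write-up. The contradiction is \emph{not} that two arcs of $\gamma$ cross --- the arcs of $\gamma$ lie on $\partial B$ and are disjoint by hypothesis --- so the correct formulation is the one in your parenthetical (a separating curve built from pieces of $C$ and of one boundary component), and that step still has to be carried out; the paper's curve $\beta$ is exactly the missing construction. Also, your identification of ``$S\cap C$ connected'' with ``$S$ is visited in a single block'' silently uses the no-horseshoe hypothesis: two consecutive visits of $C$ to the same $S$ separated by a single arc of $\gamma$ would be precisely a horseshoe, so each block is in fact a single subarc of $S$.

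The route you recommend as cleanest --- the outermost-arc argument --- does not work. The arcs of $\gamma\cap M_k'$ are contained in $\partial B$, not properly embedded in the interior of $B$, so no such arc cuts off a subdisc $D_a\subset B$. And viewed as properly embedded arcs in the planar surface $M_k'$, an arc whose endpoints lie on two \emph{distinct} boundary components never separates $M_k'$; an arc that does cut off a region meeting $\partial M_k'$ in part of a single component $S$ is exactly a horseshoe, which the hypothesis excludes. So the outermost arc you hope to find does not exist in the no-horseshoe situation, and the global interleaving argument cannot be sidestepped this way.
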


\begin{proof}
Let $S$ be a connected component of $\partial M_k'$ and suppose $S \cap C$ contains more than one interval.
Let $S'$ be an interval of $S \setminus C$ and let $a$ and $b$ denote the endpoints of $S'$.
$C \setminus a \cup b $ consists of two arcs, call them $C_1$ and $C_2$. Let $S_1 \neq S$ be a connected component
of $\partial M_k'$ that intersects $C_1$. We claim that $C_2$ does not intersect $S_1$.
For suppose it does, consider then a subarc $C_1'$ of $C_1$ from $a$ until the fist point of intersection with 
$S_1$ and denote this point by $a_1$. Similarly, denote by $C_2'$ the subarc of $C_2$ from $a$
until the fist point of intersection with $S_1$ and call it $a_2$. Let $S_1'$ be an arc of $S_1$ from
$a_1$ to $a_2$, then $\beta = C_1' \cup C_2' \cup S_1'$ is a closed curve separating $M_k'$ into two connected components.
Moreover, the point $b$ and points of $(C_1' \cup C_2')\cap S$ belong to different connected components of
$M_k' \setminus \beta$. This is a contradiction since $B$ is connected.


Suppose the intersection $C \cap S_1$ is not connected. We can then find a subarc $C_3 \subset C_1$
that intersects $S_1$ only at the endpoints. The number of connected components of $\partial M_k'$ that intersect $C_3$
is strictly smaller than the number of components that intersect $C_1$.
Proceeding in this way we can find an arc of $C$ that has endpoints on $S_j$, and its interior intersects only one 
connected component $S_{j+1}$ of $\partial M_k'$. It follows that $S_{j+1} \cap C$ is connected.
\end{proof}

We can now construct the desired $T-$map.
By inductive assumption there exists a $T-$map on $B$ with fibers
$\leq |\partial B| + 4 (N-n-1 +k') \sqrt{A} + \sum _{i=n+1} ^{N-1} c(r) (\frac{3}{4})^{i/2} \sqrt{|M|}+\epsilon'$.
Note that $|\partial B|\leq |\partial M_k| + c(r)\sqrt{|M_k|}+\epsilon'$
and the number of connected components $k'$ of $\partial B$ satisfies $k' \leq k$.

Suppose first that $\alpha \subset \partial B$ is a horseshoe. 
It separates $M_k'$ into two connected components. Let $B_2$ denote the connected component
that does not contain $B$. Since $|B_2|\leq (\frac{3}{4})^{n+1}|M| $,
it admits a $T-$map with the desired bound on length of fibiers. 
By Lemma \ref{concatenation} we can extend the $T-$map
to $B \cup B_2$. 
Inductively we extend the $T-$map to a subset $B' \subset M_k'$,
such that $B \subset B'$ and $\partial B'$ does not contain any
horseshoes. By Lemma \ref{horseshoe} $\partial B' \cap N_{\delta}(S_i)$
is connected for some $i$. By Lemma \ref{concatenation} we can extend the $T-$map
to $B' \cup N_{\delta}(S_i)$.
We iterate this procedure until we extended the $T-$map to a set $B''$,
which contains $N_{\delta}(S_i)$ for all $i$. 
The last step is to extend the $T-$map to the whole of $M_k$
exactly as we did it in Case 1.

%

To finish the proof of Theorem \ref{tree1} recall that
$N \sqrt{A}= \lceil \log_{4/3}(\frac{|M|}{A}) \rceil \sqrt{A} \rightarrow 0$
as $A \rightarrow 0$.
Since $A$ can be chosen arbitrarily small Theorem \ref{tree1} 
follows from (\ref{inductive statement}).

Now we can prove Theorem \ref{1/3 area}. Let $M$ be a Riemannian 
2-sphere and $f:M \rightarrow T$ a $T-$map with fibers of length
$\leq 26 \sqrt{|M|}$. If there are no trivalent vertices in $T$
we can find a short closed curve that separates $M$ into two halves of equal area.

For each trivalent vertex $v_k \in T$
let $\alpha_1^k \cup \alpha_2^k \cup \alpha_3^k = \theta_k = f^{-1}(v_k)$.
Denote the closed curve $\alpha^k_i \cup \alpha^k_j$ by $\gamma^k_{ij}$.
Assume that for every $k,i,j$ ($i \neq j$) $\gamma^k_{ij}$ separates
$M$ into two discs, s.t. the area of the smaller disc is strictly smaller
than $\frac{1}{3}|M|$. Let $k,i,j$ be such that the area of the smaller disc is maximized
among all such curves. 

Let $D_s$ denote the smaller and $D_l$ denote the larger of the subdiscs of 
$M\setminus \gamma^k_{ij}$. 
Let $e$ denote the edge of $T$ adjacent to $v_k$, such that $f^{-1}(e) \subset D_{l}$.
We observe that the other two edges adjacent to $v_k$ are contained in $f(D_s)$,
for otherwise it would follow that the area of the 
smaller disc is not maximal for $\gamma^k_{ij}$.

We conclude that for some $x\in e$ $f^{-1}(x)$ subdivides $M$ into two discs
of area $\geq \frac{1}{3}|M|$ for otherwise it would again contradict our choice
of $\gamma^k_{ij}$.

\section{Morse function}

In this section we prove the existence of a Morse function $f$ from
$M$ to $\mathbb{R}$ with short preimages.

\begin{definition} \label{m-function def}
Let $M$ be a manifold with boundary. A function $f:M \rightarrow \mathbb{R}$
is called an $m-$function if it is Morse on the interior of $M$,
constant on each boundary component and  
maps boundary components to distinct points disjoint 
from the critical values of $f$.
\end{definition}

\begin{theorem} \label{morse1}
For $r \in (0,\frac{1}{4}]$ and any $\epsilon >0$ there exists an $m-$map $f$ from 
$M_p$, $p \geq 0$, so that each fiber of the map has
length less than $\frac{2 c(r)}{1-\sqrt{1-r}} + |\partial M_p| + \epsilon$.
\end{theorem}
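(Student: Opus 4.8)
The plan is to reproduce the proof of Theorem~\ref{tree1} line by line, replacing $T$-maps by $m$-functions everywhere, and to locate the extra factor $2$ in the one step where the two notions genuinely differ: the assembly of the local pieces.

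First I would restate the auxiliary lemmas of Section~3 for $m$-functions. Lemmas~\ref{small length}, \ref{small area_S^2} and \ref{small area_Mk} carry over with no essential change — a diffeomorphism of a short-boundary disc onto $D_{st}$ taking concentric circles to short curves is in particular an $m$-function, and the induction on $\lceil |\partial D|/\sqrt A\rceil$ (Besicovitch to halve a small disc, then glue) uses only the concatenation lemma — so for every $\epsilon>0$ there is $A>0$ such that every submanifold $M_k\subset M_p$ with $|M_k|\le A$ admits an $m$-function with fibers of length $\le |\partial M_k|+4(k-1)\sqrt A+\epsilon$. The new ingredient is the $m$-function analogue of Lemma~\ref{concatenation}: if $A_1\cap A_2=\alpha$ is a single arc, $c_i\subset\partial A_i$ is the boundary component through $\alpha$, $|c_1\cup c_2|<L$, each $A_i$ carries an $m$-function with fibers $<L$, and moreover $c_1$ is the maximum of the $m$-function on $A_1$ while $c_2$ is the minimum of the one on $A_2$, then $A_1\cup A_2$ carries an $m$-function with fibers $\le L$: translate so that $c_1$ and $c_2$ sit at height $0$, put $f=g_1$ on $A_1$ (so $f\le 0$) and $f=g_2$ on $A_2$ (so $f\ge 0$), resolve the two corners at the endpoints of $\alpha$ by the surgery of Lemma~\ref{concatenation} — replacing its $\theta$-graph fibre by Morse saddles — and perturb so that all boundary components again become distinct regular values disjoint from the critical levels; since the ranges of $f|_{A_1}$ and $f|_{A_2}$ overlap only in $\{0\}$, every fibre of $f$ is, up to the local modification near $\alpha$, a fibre of $g_1$, a fibre of $g_2$, or the circle obtained from $c_1\cup c_2$ by deleting the interior of $\alpha$, hence of length $\le L$. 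To be able to invoke this lemma one must strengthen the inductive statement so that it also lets us prescribe one or two boundary components of $M_k$ to be extremal for the $m$-function; it is here, where passing from a tree to the line costs something, that the factor $2$ creeps in.

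The induction then proceeds exactly as in Theorem~\ref{tree1}. Fix $\epsilon>0$, take $A$ as above, set $N=\lceil\log_{4/3}(|M_p|/A)\rceil$, and prove by downward induction on $n$ that every $M_k\subset M_p$ with $(\frac{3}{4})^{n+1}|M_p|<|M_k|\le(\frac{3}{4})^{n}|M_p|$ admits an $m$-function with fibers of length
\begin{equation*}
\le |\partial M_k|+4(N-n+k)\sqrt{A}+2\sum_{i=n}^{N-1}c(r)\Big(\tfrac{3}{4}\Big)^{i/2}\sqrt{|M_p|}+\epsilon ,
\end{equation*}
the base case $n=N$ being the small-area lemma above. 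For the step, Proposition~\ref{subdivision} provides $\gamma\in S(M_k,r)$ of length $\le c(r)\sqrt{|M_k|}+\epsilon'$, and one follows the two cases of the proof of Theorem~\ref{tree1}: Case~1, $\gamma$ a simple closed curve splitting $M_k$ into two regions of area $\le(\frac{3}{4})^{n+1}|M_p|$, glued along $\gamma$ by the $m$-concatenation above; Case~2, $\gamma$ a union of arcs, where one fattens the collars $N_\delta(S_i)$ of the components of $\partial M_k$ met by $\gamma$, works with $B=A_1\setminus\bigcup N_\delta(S_i)$, extends the $m$-function across horseshoes by Lemma~\ref{horseshoe}, then reattaches the collars one at a time and the pieces $A_2,\dots,A_m$ — everywhere replacing Lemma~\ref{concatenation} by its $m$-version. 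The difference with the tree case is this: the subdividing curve $\gamma$ produced at stage $n$ borders two of the resulting pieces, the distinguished $A_1$ and its complement, and — unlike a vertex of a tree, which has room for three edges — it can be made extremal for the $m$-function of at most one of them. On the other side it sits at an interior level, so it is swept past twice (once on each side of that level) and enters the fibre estimate through the boundary length of that piece as well; recursively this charges $\gamma$ to the bound two times, which is what produces the doubled term $2c(r)(\frac{3}{4})^{n/2}\sqrt{|M_p|}$. The collars $N_\delta(S_i)$, by contrast, are swept by circles of length arbitrarily close to $|S_i|$ and are always reattached at an extreme value, so $\partial M_p$ is charged only once. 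Letting $A\to0$ and summing $2\sum_{i\ge0}c(r)(\frac{3}{4})^{i/2}\sqrt{|M_p|}=\frac{2c(r)}{1-\sqrt{1-r}}\sqrt{|M_p|}$ (which for $r=\tfrac14$, $c(\tfrac14)\le2\sqrt3$, is $\le52\sqrt{|M_p|}$ when $\partial M_p=\emptyset$) yields the stated estimate.

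The hard part will be the bookkeeping that keeps this constant at exactly $2$ and prevents it from compounding with the recursion depth $N$: a single level set of the assembled $m$-function can meet pieces from several stages simultaneously, and the ``interior-level'' subdividing curves described above could in principle pile up. In Case~2 the surgeries must therefore be carried out in a compatible order — $B$ first, then the collars $N_\delta(S_i)$ one by one via Lemma~\ref{horseshoe}, then $A_2,\dots,A_m$, each piece entered exactly once — so that at every height the fibre decomposes into controlled pieces whose lengths telescope into the geometric series. A secondary difficulty, which has no counterpart in the tree-valued setting because a tree carries no global linear order, is maintaining an honest $m$-function through every surgery; this is what forces the corner-rounding at the endpoints of the gluing arcs and a final simultaneous perturbation separating all the boundary levels from one another and from the critical levels.
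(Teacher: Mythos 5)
Your overall strategy --- downward induction on the area scale, subdivision via Proposition \ref{subdivision}, the Case 1/Case 2 dichotomy, and the diagnosis that the factor $2$ arises because a subdividing curve can be extremal for the $m$-function on only one of the two sides it borders --- matches the paper's. But the formal apparatus you set up does not deliver the construction. Your $m$-concatenation lemma requires the gluing component $c_1$ to be the maximum level of $g_1$ and $c_2$ the minimum level of $g_2$. In Case 2, all the arcs of $\gamma$ lie on a \emph{single} boundary component $C$ of $B$, and one must successively glue several pieces (the regions beyond horseshoes, then the collars $N_\delta(S_i)$) to $B$ along disjoint sub-arcs of $C$. Only the first of these gluings can have $C$ at an extremal level; after it, the level of $C$ is interior to the range of the partially assembled function, and every subsequent gluing falls outside the hypotheses of your lemma. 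The paper's substitute is Lemma \ref{theta}: an explicit Morse-type sweep-out of a neighbourhood of a $\theta$-graph $\alpha_1\cup\alpha_2\cup\alpha_3$ whose three boundary curves $l_{23},l_{12},l_{13}$ sit at levels $0,1,2$ and whose fibers at the intermediate levels have length up to $2|\alpha_1|+|\alpha_2|+|\alpha_3|$ --- this is the one place where the doubling is actually \emph{produced} rather than merely predicted, and you only gesture at it (``replacing its $\theta$-graph fibre by Morse saddles''). Without this lemma, or an equivalent statement letting glued pieces occupy nested sub-intervals of $\mathbb{R}$ with the trapped boundary curves charged to the intermediate fibers, the induction does not close.

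The same point invalidates your claim that the small-area lemmas ``carry over with no essential change'' with the bound $|\partial M_k|+4(k-1)\sqrt A+\epsilon$. An $m$-function on a region with $k\ge 3$ boundary components must place the $k$ boundary levels at distinct points of $\mathbb{R}$, at most two of them extremal; at a level lying strictly between two boundary levels the fiber necessarily contains a circle parallel to each trapped boundary component in addition to the rest of the sweep. This is why the paper runs the disc induction (Lemma \ref{m-small area disc}) with the strengthened bound $|\partial D|+(4+2\sqrt2)\sqrt{|D|}+\epsilon'$ --- the Besicovitch arc $\alpha$ is counted twice, once inside $|\partial D_1|$ and once in the curve $l_{13}$ near $\partial D_2$ still present at the same level --- and why Lemma \ref{m-small area Mk} concludes with $2|\partial M_k|+4(k-1)\sqrt A+\epsilon$ rather than your $|\partial M_k|+4(k-1)\sqrt A+\epsilon$. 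So the base case of your induction is asserted with a bound the construction does not give, and the inductive step is missing its key gluing tool; both require the $\theta$-neighbourhood lemma to be stated and proved.
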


Theorem \ref{morse} follows by taking $r = \frac{1}{4}$ and applying Theorem \ref{papasoglu}.

The proof of Theorem \ref{morse1} proceeds along the same lines as the proof 
of Theorem \ref{tree1}. There are two main differences. The fist difference is that 
we would like the function to be smooth (in particular, curves in the foliation $f^{-1}(x)$
are not allowed to have corners) and have singularities of Morse type.
This is accomplished by a simple surgery described in Lemma \ref{theta}.

The second difference is that we would like to bound the length
of the whole level set, not just of 
the individual connected components.

We will need one technical lemma similar to concatenation Lemma \ref{concatenation}.
Let $\theta \subset M_p$ denote a union of three non-intersecting simple arcs $\alpha_i$
with common endpoints.
$\theta$ subdivides $M_p$ into three regions $U_{12}$,  $U_{23}$,  $U_{13}$, 
choosing the indices so that $\alpha_i \cup \alpha_j \subset \partial U_{ij}$.
Let $l_{ij}$ denote a curve obtained by pushing $\alpha_i \cup \alpha_j$ inside $U_{ij}$
by a small perturbation, so that it is contained in a small normal neighbourhood of 
$\theta$, smooth and has length $<|\alpha_i|+|\alpha_j| + \epsilon$. Let
$U(\theta)$ denote the neighbourhood of $\theta$ bounded by curves $l_{ij}$
(see Figure \ref{fig_theta}).

\begin{figure} 
   \centering	
	\includegraphics[scale=0.3]{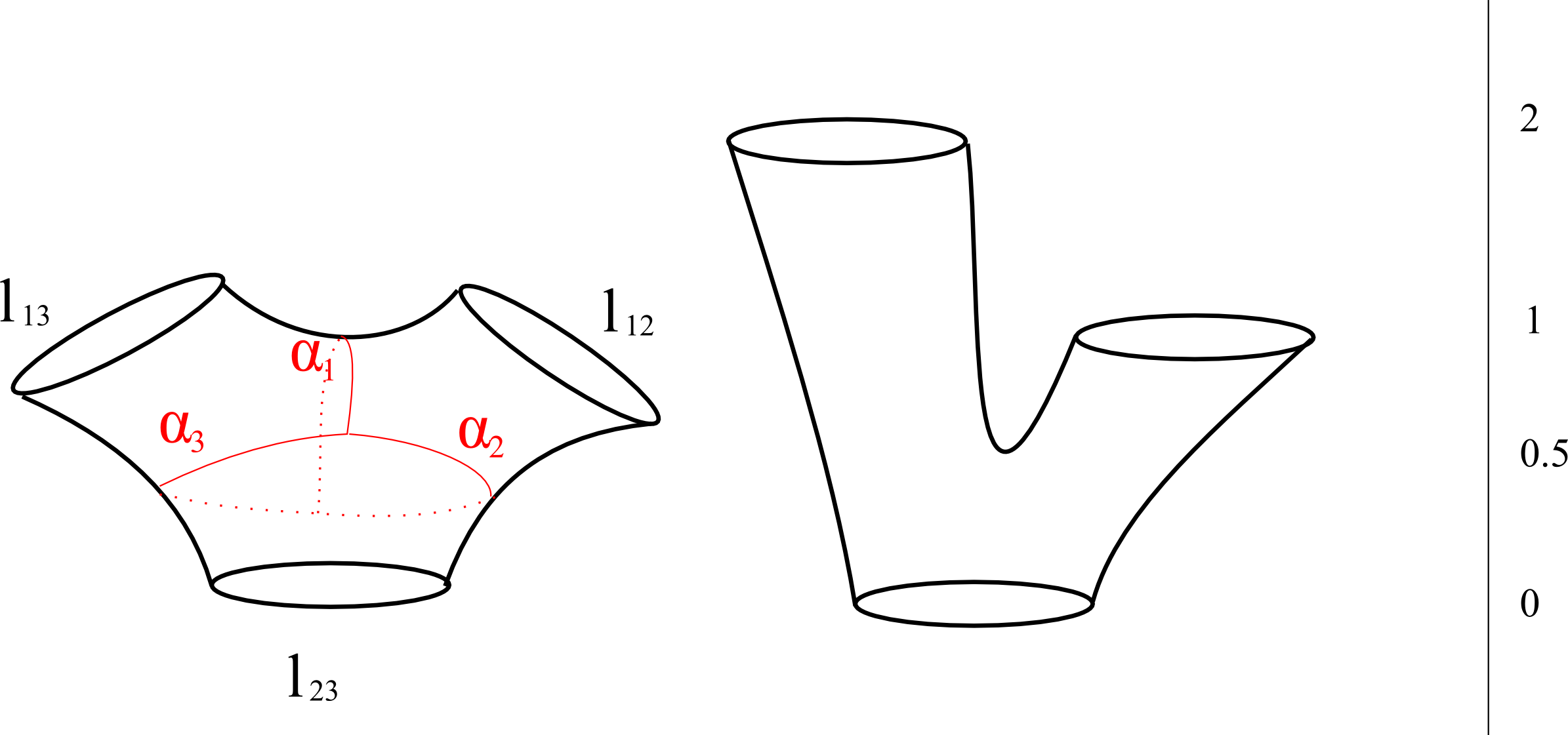}
	\caption{Consturcting an $m-$function on the neighbourhood of $\theta$.} \label{fig_theta}
\end{figure}

\begin{figure} 
   \centering	
	\includegraphics[scale=0.7]{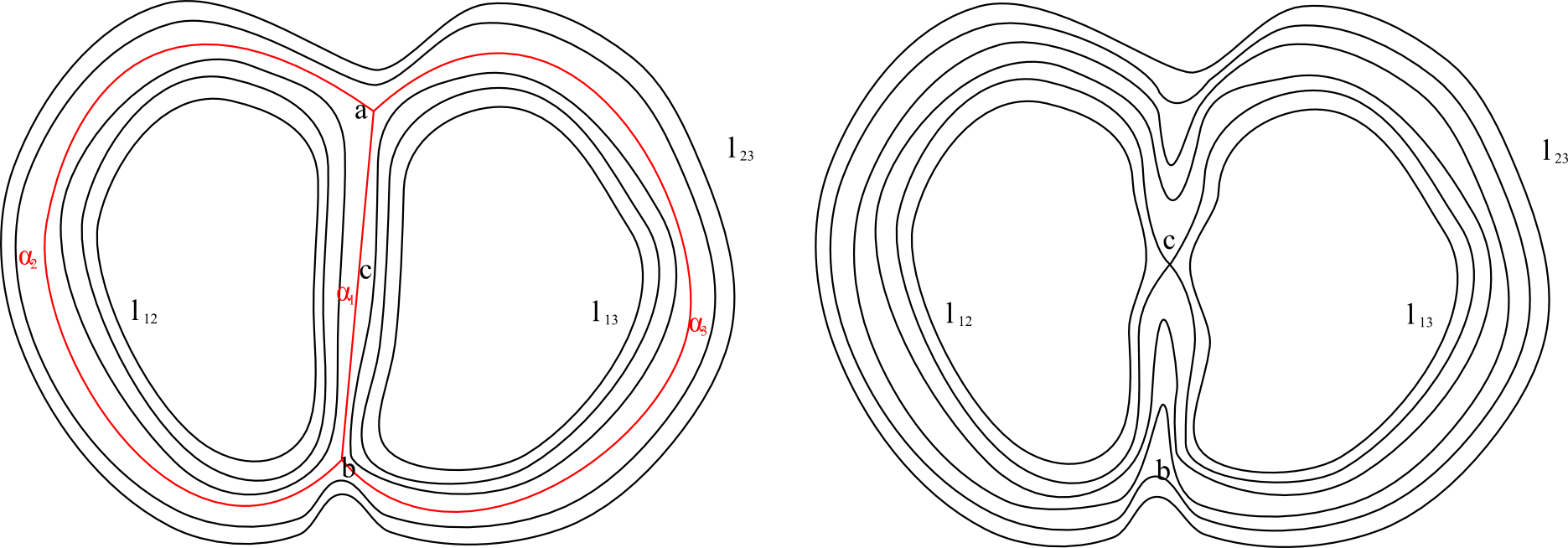}
	\caption{Surgery in the proof of Lemma \ref{theta}}. \label{fig_theta2}
\end{figure}

\begin{lemma} \label{theta}
$U(\theta)$ admits an $m-$map $f:U(\theta) \rightarrow [0,2]$
with $f(l_{23})=0$, $f(l_{12})=1$, $f(l_{13})=2$, and fibers satisfying the following inequalities:

\begin{enumerate}
\item
For $x \in [0,0.5]$, $|f^{-1}(x)|\leq |\alpha_2|+|\alpha_3|+\epsilon$

\item
For $x \in (0.5,1]$, $|f^{-1}(x)|\leq 2|\alpha_1| + |\alpha_2|+|\alpha_3|+\epsilon$

\item
For $x \in (1,2]$, $|f^{-1}(x)|\leq |\alpha_1|+|\alpha_3| +\epsilon$
\end{enumerate}

Finally, let $n_{ij}$ denote an outward unit normal at some point on $l_{ij}$,
then $df(n_{23})<0$, $df(n_{12})>0$ and $df(n_{13})>0$.

\end{lemma}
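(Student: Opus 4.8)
The plan is to construct the $m$-map on $U(\theta)$ explicitly by building it as a "slowed-down" version of the natural Morse function on a $\theta$-neighbourhood, being careful about both the smoothness at the trivalent singularity and the length bounds on each of the three regimes. First I would set up a model. Choose a small normal neighbourhood of $\theta$ so that $U(\theta)$ is, up to diffeomorphism, the region bounded by the three curves $l_{23},l_{12},l_{13}$, and note that it deformation retracts onto $\theta$. Near the two common endpoints $v,w$ of the arcs $\alpha_i$, the graph $\theta$ looks like three rays emanating from a point, and a neighbourhood there is modelled on a small disc with the three curves $l_{ij}$ meeting its boundary in arcs; this is where a Morse saddle must be placed. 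Away from those two points, $U(\theta)$ is a union of thin strips, one alongside each arc $\alpha_i$, and on each strip we can use the arc-length-like coordinate along $\alpha_i$ together with a transverse coordinate.

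The construction itself: put a single Morse critical point (a saddle, index $1$) at an interior point of $U(\theta)$ near one of the two vertices of $\theta$, at the level $f=0.5$ (or wherever convenient), so that the level sets below and above it change topology exactly once, passing from the one-component picture (a circle parallel to $l_{23}$) to the picture where the level set runs along both $\alpha_2$ and $\alpha_3$ and then, after crossing $l_{12}$ and absorbing the strip along $\alpha_1$, to a single circle parallel to $l_{13}$. Concretely I would define $f$ region by region: on the strip along $\alpha_2\cup\alpha_3$ let the fibers be curves staying close to $\alpha_2\cup\alpha_3$, so their length is $\le|\alpha_2|+|\alpha_3|+\epsilon$, giving (1); in the transitional range $(0.5,1]$ the fiber must traverse the strip along $\alpha_1$ \emph{twice} (down one side and back the other) in addition to running along $\alpha_2$ and $\alpha_3$, so its length is $\le 2|\alpha_1|+|\alpha_2|+|\alpha_3|+\epsilon$, giving (2); for $x\in(1,2]$ the strip along $\alpha_2$ has been pinched off and the fiber runs along $\alpha_1\cup\alpha_3$, giving (3). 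The normal-derivative conditions $df(n_{23})<0$, $df(n_{12})>0$, $df(n_{13})>0$ are arranged by fixing the boundary values $f(l_{23})=0$, $f(l_{12})=1$, $f(l_{13})=2$ and making $f$ monotone transverse to each $l_{ij}$ near that boundary component, which is automatic from the construction since each $l_{ij}$ is a regular level set sitting on the boundary.

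For the smoothness and Morse conditions I would invoke the surgery picture indicated in Figure \ref{fig_theta2}: start from a piecewise-smooth model (level sets with corners at the vertices of $\theta$), then perform the standard local smoothing of a "figure-eight to two circles" transition, which is precisely a saddle; this replaces the non-smooth pinch by a genuine Morse critical point while changing lengths by at most an arbitrarily small amount, absorbed into $\epsilon$. The same smoothing ensures that the map restricted to the complement of a neighbourhood of the critical point is a submersion, so all other fibers are smooth embedded curves and circles, and the boundary level sets $l_{ij}$ are regular. One must also check the map sends each $l_{ij}$ to a distinct point ($0$, $1$, $2$) disjoint from critical values ($0.5$), which holds by construction.

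The main obstacle I expect is getting the length bound (2) sharp, i.e. controlling the fiber in the middle regime $(0.5,1]$ so it really only picks up the factor $2|\alpha_1|$ and not, say, an extra copy of $|\alpha_2|$ or $|\alpha_3|$; this requires choosing the foliation of the $\alpha_1$-strip carefully so that the transverse circles stay within a normal neighbourhood of $\alpha_1$ (of width $o(\epsilon)$) and close up near the vertices without detouring along the other arcs. This is a local modelling issue near the two trivalent points of $\theta$, and handling it amounts to writing down an explicit function on a standard model disc with three boundary marked arcs and checking its level sets have the stated lengths — routine once the model is fixed, but it is where the real content of the lemma lies. The rest (smoothing to Morse type, the normal-derivative signs, distinctness of boundary values) is bookkeeping that follows the template already established in Lemma \ref{concatenation} and Lemma \ref{small length}.
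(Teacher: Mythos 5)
Your proposal is correct and takes essentially the same route as the paper: foliate the three collar annuli between $\alpha_i\cup\alpha_j$ and $l_{ij}$ by parallel curves of controlled length, insert a single Morse saddle by the standard local surgery (the paper's local model is $f(x,y)=0.5-x^2+y^2$), and read off the three length regimes and the boundary normal derivatives from the resulting foliation. The only cosmetic difference is that the paper places the saddle at the midpoint of $\alpha_1$ rather than near a vertex of $\theta$, which does not affect the bounds.
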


\begin{proof}
Let $a$ and $b$ denote the endpoints of $\alpha_i$.
Since $M$ is orientable, the normal tubular neighbourhood 
of $\alpha_i \cup \alpha_j$ is homeomorphic to the cylinder.
Let $l_{ij}$ be the boundary component of the tubular neighbourhood
that does not intersect $\alpha_k$, $(k \neq i,j)$. After a small perturbation in 
the neighbourhood of $a$ and $b$ we can assume that $l_{ij}$ is smooth
and there is a diffeomorphism $f_{ij}$ from $(0,1] \times S^1$ onto the region between
$\alpha_i \cup \alpha_j$ and $l_{ij}$ with $|f_{ij}(t \times S^1)| \leq |\alpha_i \cup \alpha_j| + \epsilon$. 

Let $c$ be the midpoint of $\alpha_1$.
We perform a straightforward surgery to the curves $\{f_{ij}(t \times S^1) \}$
depicted on Figure \ref{fig_theta2}. 
In the neighbourhood of $c$ we can choose a coordinate chart so that curves in the new foliation
are given by $f(x,y)=0.5 - x^2 + y^2$.
\end{proof}

Now we prove the analogue of Lemmas \ref{small area_S^2} and \ref{small area_Mk}
for $m-$functions. 

\begin{lemma} \label{m-small area disc}
For any $\epsilon > 0$ there exists $A>0$, such that for every disc $D \subset M$
with $|D| \leq A$ there exists an $m-$map $f$ from $D$ with fibers of
length less than $|\partial D| + \epsilon$. 
\end{lemma}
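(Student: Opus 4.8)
The plan is to mimic the proof of Lemma \ref{small area_S^2}, running the same Besicovitch-bisection induction but now feeding the pieces through the $m$-function machinery (Lemmas \ref{small length} and \ref{theta}) instead of the bare $T$-map machinery. First I would fix $\epsilon>0$, let $l>0$ be the constant from Lemma \ref{small length} (applied with, say, $\epsilon/2$), and choose $A<\min\{\epsilon/2,\ l^2/64\}$. As in Lemma \ref{small area_S^2}, the induction is on $n=\lceil |\partial D|/\sqrt{|A|}\rceil$. For the base case $n\le 8$ we have $|\partial D|\le l$, so Lemma \ref{small length} gives a diffeomorphism $D\to D_{st}$ under which concentric circles pull back to curves of length $\le(1+\epsilon/2)|\partial D|$; composing with the radial height function $x^2+y^2$ (or rather a monotone reparametrization of it) on $D_{st}$ produces an $m$-function on $D$ whose unique critical point is the center, with fibers of length $\le(1+\epsilon/2)|\partial D|\le|\partial D|+\epsilon$. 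The boundary circle is the single fiber over the endpoint value, as required by Definition \ref{m-function def}.

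For the inductive step with $n\ge 9$, I would subdivide $\partial D$ into four arcs of equal length and use the Besicovitch lemma to find an arc $\alpha$ with $|\alpha|\le\sqrt{|A|}$ joining two opposite subarcs, splitting $D$ into subdiscs $D_1,D_2$ of area $\le A$ and boundary length $\le\frac34|\partial D|+\sqrt{|A|}\le(n-1)\sqrt{|A|}$. By the inductive hypothesis each $D_i$ carries an $m$-function $f_i$ with fibers of length $<|\partial D_i|+\epsilon\le|\partial D|+\epsilon$. The task is then to glue $f_1$ and $f_2$ along the common arc $\alpha$ into a single $m$-function on $D$. This is exactly where I would invoke the $\theta$-surgery of Lemma \ref{theta}: after arranging (by a preliminary monotone reparametrization of the target intervals) that $f_1$ attains its minimum on $\alpha$'s side and $f_2$ its minimum on $\alpha$'s side, or rather that the two foliations meet transversally along a small neighborhood of $\alpha$, I can treat $\alpha$ together with two short pushed-off copies inside $D_1$ and $D_2$ as a $\theta$-graph $\theta$ with arcs $\alpha_1=\alpha$, $\alpha_2,\alpha_3$ tiny, replace the region $U(\theta)$ by Lemma \ref{theta}'s model $m$-function, and extend $f_1,f_2$ outside $U(\theta)$ with suitable monotone reparametrizations so that everything matches up. The length bounds in Lemma \ref{theta}(1)--(3) only cost $2|\alpha_1|+|\alpha_2|+|\alpha_3|+\epsilon$, which since $|\alpha_1|=|\alpha|\le\sqrt{|A|}$ and $A$ is tiny can be absorbed into the $\epsilon$ (this is morally why $m$-functions carry the extra factor of $2$ compared to $T$-maps, reflected already in Theorem \ref{morse1} vs. Theorem \ref{tree1}).

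I expect the main obstacle to be the gluing step, i.e. verifying that the surgery of Lemma \ref{theta} can genuinely be carried out at the interface between $f_1$ and $f_2$ and produces a globally Morse function with the right critical-value bookkeeping. Concretely one must (i) choose the tiny arcs $\alpha_2,\alpha_3$ inside $D_1,D_2$ so that the level sets of $f_i$ restricted to the sliver between $\alpha_i$ and the boundary of $D$ are graphical over $\alpha$, which requires $\alpha$ to be almost a level arc of each $f_i$ — achievable after a further small perturbation of $f_i$ near $\alpha$ since $\alpha$ is short and $A$ is small; (ii) match the orientations of the gradients across $l_{ij}$, which is precisely the content of the last sentence of Lemma \ref{theta}; and (iii) reparametrize the images so that the new critical value introduced by the surgery ($x=0.5$ in the statement) and the old critical values of $f_1,f_2$ remain distinct, which is a generic condition. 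Once these are handled the length estimate is immediate from the inductive bound on the $D_i$-fibers plus the $O(\sqrt A)$ contribution from the $\theta$-neighborhood, and the lemma follows by induction, with $A$ chosen small enough that all accumulated $\sqrt A$ terms stay below $\epsilon$.
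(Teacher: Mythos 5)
Your outline follows the paper's strategy (Besicovitch bisection plus the $\theta$-surgery of Lemma \ref{theta}), but there is a genuine gap in the length bookkeeping: you keep the unmodified inductive hypothesis ``fibers of length $<|\partial D_i|+\epsilon$'' and claim the gluing cost can be absorbed into $\epsilon$. It cannot. Unlike a $T$-map, an $m$-function on $D=D_1\cup_\alpha D_2$ has \emph{disconnected} level sets after the saddle: while the level curve inside $D_1$ is being swept down to a point, the level set also contains a second component of length $\approx|\partial D_2|$ waiting on the other side. So the worst fiber has length about $(|\partial D_1|+E_1)+|\partial D_2|=|\partial D|+2|\alpha|+E_1$, where $E_1$ is the error inherited from $D_1$. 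The induction therefore does not close: each level adds $2|\alpha|$ (up to $2\sqrt{|D|}$) on top of the \emph{full} error from the previous level, and since the number of levels $n=\lceil|\partial D|/\sqrt{A}\rceil$ is unbounded ($|D|\le A$ does not bound $|\partial D|$), the naive accumulation is of order $|\partial D|$, not $\epsilon$.

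The paper's proof closes the induction by strengthening the inductive hypothesis to
$|\partial D|+(4+2\sqrt2)\sqrt{|D|}+\epsilon'$, i.e.\ an error term depending on \emph{area}. Because the disc $D_1$ that is swept while the other component waits satisfies $|D_1|\le\frac12|D|$, its inherited error is at most $\frac{4+2\sqrt2}{\sqrt2}\sqrt{|D|}$, and $|\partial D_1|+|\partial D_2|+\frac{4+2\sqrt2}{\sqrt2}\sqrt{|D|}\le|\partial D|+2\sqrt{|D|}+\frac{4+2\sqrt2}{\sqrt2}\sqrt{|D|}=|\partial D|+(4+2\sqrt2)\sqrt{|D|}$: the per-step costs form a convergent geometric series (this is exactly where the constant $4+2\sqrt2$ comes from), and the whole error term is then killed by taking $A<\epsilon^2/100$. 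This is the essential new idea your proposal is missing. A secondary point: your $\theta$-graph (taking $\alpha_1=\alpha$ with two tiny push-offs $\alpha_2,\alpha_3$) is not the right configuration — its trivalent vertices would lie on $\partial D$ and its complementary regions are not the collar, $D_1$ and $D_2$. The paper instead takes $\theta=\gamma\cup\alpha$ with $\gamma$ an inner parallel copy of $\partial D$ and $\alpha$ the Besicovitch arc joining opposite subarcs of $\gamma$, which gives the correct Morse picture: boundary circle, saddle, then two circles bounding $D_1$ and $D_2$.
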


\begin{proof}
As in the proof of Lemma \ref{small area_S^2}, we proceed by induction on $n=\lceil \frac{|\partial D|}{\sqrt{|A|}} \rceil$.
However, the inductive assumption is now different. 
We would like to show that for every $\epsilon >0$
a subdisc $D \subset M$ admits an $m-$map with fibers of length
$$\leq |\partial D| + (4+2\sqrt{2}) \sqrt{|D|} + \epsilon'$$

Assume the Lemma to be true for all subdiscs with $\lceil \frac{|\partial D|}{\sqrt{A}} \rceil<n$.

We take small tubular neigbourhood of $\partial D$ in $D$ and foliate it by closed curves
of length $\leq |\partial D| + \epsilon'$. We subdivide the innermost curve $\gamma$
into 4 arcs of equal length.
By Besicovitch Lemma we can find an arc $\alpha$ of length $\leq \sqrt{|D|}$ connecting two opposite 
subarcs of $\gamma$. 

Let $N$ be the neighbourhood of $\gamma \cup \alpha$ as in Lemma \ref{theta}.
Note that $\partial N$ has 3 connected components:
one of them is $\partial D$ and denote the other two by $C_1$ and $C_2$, bounding
subdiscs $D_1$ and $D_2$ respectively. Assume that $D_1$ is a disc of smaller area,
hence $|D_1| < \frac{1}{2}|D|$.

$N$ admits an $m-$map $f_0:N\rightarrow [0,2]$ with fibers of length
$|f^{-1}(x)\leq |\partial D|+2\sqrt{|D|} +\epsilon'$ for $x \in [0,1]$
and $|f^{-1}(x)|\leq |\partial D_2|+\epsilon'$ for $x \in (1,2]$.

Since $|\partial D_1|\leq (n-1) \sqrt{A}$ by inductive assumption
$D_1$ admits an $m-$map $f_1$
with fibers of length $< |\partial D_1|+ \frac{4+2\sqrt{2}}{\sqrt{2}} \sqrt{|D|} +\epsilon' $.
After appropriately scaling $f_1$ on a small neigbourhood of $C_1$
and multiplying by $-1$ if necessary we can assume that $f_1(C)$ is the minimum point of $f_1$.  
Furthermore, we scale and shift $f_1$ so that $f_1(C_1)=1$ and $f_1(D_1)$ is contained between
$1$ and $1.5$
We now extend the $m-$map to $N \cup D_1$ with fibers of length 
$$\leq |\partial D_1|+ |\partial D_2| + \frac{4+2\sqrt{2}}{\sqrt{2}} \sqrt{|D|} + \epsilon'  $$
$$\leq |\partial D| + (4+2\sqrt{2}) \sqrt{|D|} +\epsilon'$$


By inductive assumption $D_2$ admits an $m-$function $f_2$ with fibers of length
$\leq |\partial D|+(4+2\sqrt{2}) \sqrt{|D|} + \epsilon'$. We modify $f_2$ so that
it takes on its minimum at $f_2(C_2)=2$. We can now extend $f$ to the whole of $D$.

Setting $A<\epsilon^2/100$ and $\epsilon' < \epsilon/10$ we obtain the desired result.

\end{proof}

\begin{lemma} \label{m-small area Mk}
For any $\epsilon > 0$ there exists $A>0$, such that for every submanifold with boundary $M_k \subset M_p$,
with $|M_k| \leq A$ there exists an $m-$map $f$ from $M_k$ with fibers of length
$\leq 2 |\partial M_k|+4(k-1) \sqrt{A}+\epsilon$.
\end{lemma}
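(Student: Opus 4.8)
The plan is to follow the template of Lemma \ref{small area_Mk}, but feed in the $m$-function version (Lemma \ref{m-small area disc}) in place of the $T$-map version and use the $\theta$-neighbourhood gadget of Lemma \ref{theta} in place of the concatenation Lemma \ref{concatenation}. First I would use the same geometric input as in Lemma \ref{small area_Mk}: since $|M_k| \leq A$ and $M_p$ is compact, for $A$ small every boundary component $c$ of $\partial M_k$ satisfies $dist(c, \partial M_k \setminus c) \leq \frac{2\sqrt{3}}{3}\sqrt{A}$. Hence I can attach $k-1$ arcs $\{\gamma_i\}$ along $\partial M_k$ of total length $< 2(k-1)\sqrt{A}$ so that $\partial M_k \cup \bigcup \gamma_i$ is connected and its complement in $M_k$ is an open disc $D$ with $|D| \leq A$ and $|\partial D| \leq |\partial M_k| + 4(k-1)\sqrt{A}$ (each $\gamma_i$ is traversed twice in $\partial D$).

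Next I would apply Lemma \ref{m-small area disc} to the disc $D$: for $A$ small enough it admits an $m$-map with fibers of length $< |\partial D| + \epsilon/2 \leq |\partial M_k| + 4(k-1)\sqrt{A} + \epsilon/2$. Now I need to glue the $k$ boundary components of $M_k$ back on, one at a time, turning the interior boundary component $\partial D$ into the honest boundary of $M_k$. Here is where the $\theta$-gadget enters: take a thin normal neighbourhood $N_\delta(S_i)$ of one component $S_i$ of $\partial M_k$, foliated by closed curves of length $< |S_i| + \epsilon'$, and a short closed curve $\alpha$ in $N_\delta$ separating $S_i$ from the rest; the arc $\gamma_i$ together with $\alpha$ (or with the relevant piece of $\partial D$) plays the role of the $\theta$ in Lemma \ref{theta}, so that $U(\theta)$ admits an $m$-map whose level sets have length controlled by $2|\gamma_i| + (\text{local boundary length})$. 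Concatenating the $m$-map on $D$, the $m$-map on $U(\theta)$, and the product $m$-map on $N_\delta(S_i)$ (with the orientation/sign conditions $df(n_{ij}) \gtrless 0$ of Lemma \ref{theta} chosen to make the three functions match up monotonically across the gluing curves), and iterating over $i = 1, \dots, k$, I recover an $m$-map on all of $M_k$.

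The only thing to track carefully is the length bound. Each re-gluing step passes the arc $\gamma_i$ through a $\theta$, and by clause (2) of Lemma \ref{theta} the worst level sets there have length $\leq 2|\gamma_i| + (\text{pieces of }\partial M_k) + \epsilon'$. Since $\sum |\gamma_i| < 2(k-1)\sqrt{A}$ and the total contribution of the $\partial M_k$ pieces over all the charts sums to at most $|\partial M_k|$ on the disc side and at most $|\partial M_k|$ again on the tube side (because, as in Lemma \ref{m-small area Mk}'s statement, the full level set — not just one component — can meet one chart for $D$ and one chart for some $N_\delta(S_i)$ simultaneously), the total comes out to $\leq 2|\partial M_k| + 4(k-1)\sqrt{A} + \epsilon$, absorbing the finitely many $\epsilon', \delta$ errors into $\epsilon$. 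I expect the main obstacle to be purely bookkeeping: making sure the monotonicity/sign conditions of Lemma \ref{theta} can be simultaneously satisfied at all $k$ gluings so that the pieced-together function is genuinely an $m$-function (smooth on the interior, Morse, constant on each $\partial M_k$ component, with distinct boundary values avoiding critical values), and verifying that at no single value $x$ do more than two of the charts' worst-case level sets overlap — which is what keeps the constant at $2$ rather than something larger.
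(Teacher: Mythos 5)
Your proposal follows the same route as the paper: attach $k-1$ short arcs to make the complement a disc $D$, apply Lemma \ref{m-small area disc} to $D$, and then reattach the boundary components one at a time through the $\theta$-neighbourhood construction of Lemma \ref{theta}, with the factor of $2$ in front of $|\partial M_k|$ coming from a fiber meeting both the disc side and the collar side. This matches the paper's proof in both structure and bookkeeping, so it is essentially the same argument.
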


\begin{proof}
As in the proof of Lemma \ref{small area_Mk} we connect all components of $\partial M_k$ with 
$(k-1)$ closed curves $\gamma_i$ of total length $\leq 2 (k-1) \sqrt{|M_k|}$ and denote the union 
of $\gamma_i$s and $\partial M_k$ by $C$. Denote the normal $\delta-$neighbourhood of $C$ 
in $M_k$ by $N$. After a small perturbation we can assume that the boundary of $N$
is smooth.

The complement $D$ of $N$ in $M_k$ is a disc and so by Lemma \ref{m-small area disc} it
admits an $m-$map with fibers of length $\leq |\partial M_k|+4(k-1) \sqrt{A}+\epsilon$.
Next we proceed as in the proof of Lemma \ref{small area_Mk} extending the domain of 
the $m-$function over connected components of $\partial M_k$ one by one.

Let $\{\beta_i\}_{i=1}^k$ be a collection of nested simple closed curves
in $N$ and $N_i$ denote the subset of $N$
between $\beta_i$ and $\beta_{i+1}$.
We require that $|\beta_i|\leq |\partial M_k|+4(k-1) \sqrt{A}+\epsilon$, $\beta_1 = \partial D$ and for each $i$
$N_i$ contains exactly one connected component of $\partial M_k$.
We can then apply Lemma \ref{theta} to extend the $m-$function from $D$
to $N_1$ and inductively to the whole of $M_k$.

By appropriately scaling and shifting functions obtained at each step we ensure that the desired bound
on the lengths of fibers is maintained.
\end{proof}

Now we are ready to prove Theorem \ref{morse1}.
The proof is similar to that of Theorem \ref{tree1}.
Again we proceed by induction on $\lceil \log_{4/3}(\frac{|M_p|}{A}) \rceil$.

Fix $\epsilon>0$ and let $A$ be as in Lemma \ref{m-small area Mk}.
Let $N= \lceil \log_{4/3}(\frac{|M_p|}{A}) \rceil$.
We claim that for every $M_k \subset M_p$ with $(\frac{3}{4})^{n+1}|M_p|<|M_k|\leq (\frac{3}{4})^n|M_p|$
there exists an $m-$map with fibers of length 

\begin{equation}\label{m-inductive statement}
\leq |\partial M_k| + 4 (N-n+k)\sqrt{A}+ \sum_{i=n}^{N-1} 2 c(r) (\frac{3}{4})^{i/2} \sqrt{|M_p|}
\end{equation}

Choose
$\gamma \in S(M_k,r)$ of length $\leq c(r) + \epsilon'$.

\textbf{Case 1.} $\gamma$ is a simple closed curve.

Let $N_1$ and $N_2$ denote the two components of $M_p \setminus \gamma$.
In this case there is an $m-$function $f_1$ from $N_1$ onto $[a,b]$ and
an $m-$function $f_2$ onto $[b,c]$. Hence, we obtain an $m-$function
$f:M_p \rightarrow [a,c]$ satisfying the same bound on the length of the fibers.
Using the inductive assumption we calculate that this length bound is exactly what we want.

\textbf{Case 2.} $\gamma$ is a collection of arcs with endpoints on the boundary of 
$\partial M_p$.

As in the proof Theorem \ref{tree1} we add a small collar around boundary components of
$M_p$ that intersect $\gamma$. We apply the inductive assumption to $B \subset M_p$
and extend this map to regions separated from $B$ by ``horseshoes". 
Then we extend the map to collars whose intersection with $\partial B$ is a connected arc.
We iterate this procedure until we extended the map to all of $M_p$.
By appropriately scaling the $m-$functions obtained for each region 
we ensure the correct bound on the lengths of fibers.

\end{document}